\theoremstyle{plain}
\renewcommand\thefigure{\thesection.\@arabic\c@figure}
\renewcommand{\thefigure}{\arabic{section}.\arabic{figure}}
\newtheorem{thm}{\bf Theorem}
\newenvironment{theorem}{\begin{thm}} {\end{thm}}
\newtheorem{cor}{\bf Corollary}
\newtheorem{lmm}{\bf Lemma}
\newenvironment{lemma}{\begin{lmm}}{\end{lmm}}
\newtheorem{example}{\bf Example}[section]
\newtheorem{assumption}{\bf Assumption}[section]
\theoremstyle{remark}
\newtheorem{rem}{\bf Remark}[section]
\theoremstyle{definition}
\numberwithin{table}{section}
\renewcommand \wedge \times
\begin{document}
\title{\bf{}}
\title[Strong convergence rates of a fully discrete scheme for Nonlinear SPDEs
	] {Strong convergence rates of a fully discrete scheme  for a class of nonlinear stochastic PDEs with non-globally Lipschitz coefficients driven by multiplicative noise}
		\author[
	C. Huang   \; $\&$ \; J. Shen
	]{
		\;\; Can Huang${}^1$ \;\; and\;\; Jie Shen${}^{2}$
		}
	
	\thanks{${}^1$School of Mathematical Sciences, Xiamen university. 
	The research of the first author is partially supported by NSFC grants No. 91630204 and No. 11771363. \\
		\indent ${}^{2}$Corresponding author. Department of Mathematics, Purdue University. The research of the second author is partially supported
by NSF grant  DMS-2012585 and AFOSR FA9550-20-1-0309.  \\
			}
	
\begin{abstract}
We consider  a fully discrete scheme   for nonlinear stochastic PDEs with non-globally Lipschitz coefficients driven by multiplicative noise in a multi-dimensional setting. Our method uses a polynomial based spectral method in space, so it  does not require the elliptic operator $A$ and the covariance operator $Q$ of noise in the equation commute, and thus successfully alleviates a restriction of Fourier spectral method for SPDEs pointed out by Jentzen, Kloeden and Winkel  in \cite{JentzenKW11}.
 The discretization in time is a tamed semi-implicit scheme which treats the nonlinear term explicitly while being unconditionally stable.  
Under regular assumptions which are usually made for SPDEs with additive noise, we establish optimal strong convergence rates in both space and time for our fully discrete scheme.  We also present numerical experiments which are consistent with  our theoretical results.    
\end{abstract}
	
\keywords{stochastic PDE,  spectral method, optimal, convergence rate}

 \subjclass[2010]{65N35, 65E05, 65N12,  41A10, 41A25, 41A30, 41A58}	
	
\maketitle

\section{Introduction}
We consider  numerical approximation of the following nonlinear stochastic  PDE perturbed by multiplicative  
 noise:
 \begin{equation}\label{model:AC}
\begin{cases}
\displaystyle{du=Audt+F(u)dt+G(u)dW^Q(t), \;\; x\in \mathcal{O}\subset \mathbb{R}^d \ (d=1,2)},\\
u(t, x)=0, x\in\partial \mathcal{O},\\
u(0,x)=u_0(x),\;\; x\in \mathcal{O},
\end{cases}
\end{equation}
 where $A$ is the Laplacian operator on $\mathcal{O}$, 
 $F$ is the Nemytskii operator defined by $F(u)(\xi)=f(u(\xi)),\ \xi\in\mathcal{O}$, where
 $f$ is an odd-degree polynomial  with negative leading coefficient satisfying Assumption \ref{assump:2}. In particular, if $f(u)=u-u^3$, the equation becomes the well-known stochastic Allen-Cahn equation. $G(u)(\xi)=g(u(\xi))$ is another Nemytskii operator, where $g(u)$ is a Lipschitz continuous function with linear growth satisfying Assumption \ref{assump:3}, and $W^Q(t)$ is a Q-Wiener process on the probability space $(\Omega, \mathcal{F}, \{\mathcal{F}_t\}_{t\geq 0}, \mathbf{P})$ defined by (cf. \cite{PratoD96})
 \begin{align*}
 W^Q(t)=\sum\limits_{j=1}^\infty \sqrt{q}_j e_j\beta_j(t),
 \end{align*}
 where $\beta_j(t)$ are independent standard Wiener processes, and $\{(q_j, e_j)\}_{j=1}^\infty$ are eigen-pairs of a symmetric non-negative operator $Q$. We emphasize that $\{e_j\}_{j=1}^\infty$ are not necessarily eigenfunctions of $A$ in $\mathcal{O}$. 
 
  It is well known that for $u_0\in C(\mathcal{O})$,  \eqref{model:AC} admits a unique mild solution in $L^p(\Omega; C((0,T); H)\cap L^\infty(0,T; H))$ for arbitrary $p\geq 1$, that satisfy (cf. \cite{Cerrai03})
 \begin{align}\label{eq:truesol}
u(t)&=e^{tA}u_0+\int_0^t e^{(t-s)A}f(u(s))ds+\int_0^t e^{(t-s)A}g(u(s))dW^Q(s).
\end{align}
Moreover, under certain conditions to be specified later,  $\mathbf{E}\sup\limits_{t\in [0,T]}\|(-A)^{\frac{\gamma}{2}}u(t)\|^2<\infty$ for some $\gamma>1$ (cf. Theorem \ref{prop:H1r} below).

Many mathematical models in physics, biology, chemistry etc. are formulated as SPDEs (cf. \cite{Chow07, DaPrato91, LiuR05}),  and various  numerical methods have been proposed for solving SPDEs. We refer to \cite{AllenZ98, JentzenKW11, Kruse14, Wang17, Yan05} and references therein for an incomplete account of numerical approaches for SPDEs with global Lipschitz condition on $f$. In contrast,  SPDEs with non-globally Lipschitz condition on $f$ are more difficult to deal with, we refer to   \cite{BeckerGJK17, BeckerJ19, BrehierCH19, CuiH19,FengLZ17, KovacsLL15, KovacsLL18, MajeeP18, LordPS14, QiW19} for some recent advances in this regard. Moreover,  most of these work  are concerned  with additive noise (cf. \cite{BeckerGJK17, BrehierCH19, Debussche10,JentzenKW11, JentzenR18, Jentzen20, CuiH19, KovacsLL15, QiW19, Wang17}), while SPDEs with local Lipschitz condition driven by multiplicative noise have received much less attention.  We would like to point out  that in \cite{FengLZ17}, the authors considered a finite element method (FEM) for stochastic Allen-Cahn equation driven by the gradient type multiplicative noise under sufficient spatial regularity assumptions, and  in \cite{MajeeP18}, the authors also investigated a FEM for the same equation perturbed by multiplicative noise of type $g(u)\beta(t)$, where $\beta(t)$ is a Brownian motion.  In both cases,   fully implicit time discretization schemes are used  so that a nonlinear system has to be solved at each time step.   

The main goal of this paper is  to design and analyze  a  strongly convergent, linear and fully decoupled  numerical method for SPDEs with local Lipschitz condition and driven by multiplicative noise in a multi-dimensional framework.
To avoid using a fully implicit scheme for  SPDEs with local Lipschitz condition, we construct a tamed semi-implicit scheme in time (cf. \cite{ HutzenhalerJK12, TretyakovZ13, Gyongy16, Wang18}) and show  (cf.  Theorem \ref{thm:uncond}) that it is unconditionally stable under a quite general setting, which includes in particular the stochastic Allen-Cahn equation with multiplicative noise.
On the other hand, we adopt as spatial discretization a spectral-Galerkin method.
 Distinguished for their high resolution and relative low computational cost for a given accuracy threshold,  spectral methods have become a major computational tool for solving PDEs. However, only limited attempts have been made for using spectral methods 
  for SPDEs (cf. \cite{BeckerGJK17, JentzenKW11, JentzenR18}), and most of these attempts are confined to Fourier spectral methods. Note that the use of Fourier-spectral methods in these work is essential as  Fourier basis functions are eigenfunctions of the elliptic operator $-A$.  
Since in our tamed semi-implicit scheme, the nonlinear term and the noise terms are treated explicitly,  we shall employ a polynomial-based spectral method  for spatial approximation to overcome the restriction mentioned above. 
A key ingredient is to use a set of specially constructed Fourier-like {it discrete} eigenfunctions of $A$ (cf.  \cite[Chapter 8]{ShenTW11}), which are mutually orthogonal in both $L^2(\mathcal{O})$ and  $H^1(\mathcal{O})$. 

Combining the above ingredients together, we develop a fully discretized scheme that can be fully decoupled, making it very efficient.
Moreover, our method yields the following  convergence rate  under regular assumptions (Assumption \ref{assump:1}-Assumption \ref{assump:4}):
\begin{align}
{ \mathbf{E}\|u(t_k)-u_N^{k}\|\leq C(N^{-\gamma}+\tau^{\frac{1}{2}}),  }
 \end{align}
 where $u_N^{k}$ is the full-discretization of $u$ at $t_{k}$, $N$ is the number of points  in each direction in our spatial approximation, $\tau$ is the time step size and $\gamma$ is the index measuring the regularity of noise, which can be arbitrarily large provided that Assumptions \ref{assump:3} and \ref{assump:4} hold.  It extends the  results  in \cite{CuiH19, QiW19} for stochastic Allen-Cahn equation with additive noise with finite-element approximation under the essential assumption $\|(-A)^{\frac{\gamma-1}{2}}Q^{\frac{1}{2}}\|_{L^2}<\infty$.

 In summary,  the main contributions of this paper include: 
 \begin{itemize}
 \item We investigate the optimal spatial regularity of solution for \eqref{model:AC}, which lifts  the previous results $\gamma\in (1,2]$ (cf. \cite{CuiH19, JentzenKW11, JentzenR18, Kruse14, Wang17, Wang18, QiW19}) to possible arbitrarily large $\gamma$ provided Assumption \ref{assump:1}-Assumption \ref{assump:4} are fulfilled, 
 and derive optimal spatial convergence rate for our fully-discretized scheme based on the improved regularity. 
 \item Our tamed time discretization  for \eqref{model:AC} treats the nonlinear terms explicitly while  is still unconditionally stable. Thus, it avoids solving  nonlinear systems at each time  step, which is in contrast to the popular backward Euler method  (cf. \cite{JentzenKW11, JentzenR18, Kruse14, Wang18, QiW19}). 
 \item We use the Legendre spectral method, instead of the usual Fourier spectral method,  for spatial discretization which  does not require  the commutativity of operators  $A$ and $Q$, and circumvents a restriction of Fourier approximation for SPDE pointed out in \cite{JentzenKW11}. Through a matrix diagonalization process, our method based on the Legendre approximation  can also be efficiently implemented as with a Fourier approximation.
 \end{itemize}
 
The rest of this paper is organized as follows. In Section 2, some preliminaries including our main assumptions and optimal spatial regularity of solution of \eqref{model:AC} are presented. Section 3 is devoted to spatial semi-discretization and its analysis. 
In Section 4, we present our semi-implicit tamed Euler full-discretization for \eqref{model:AC}, and derive optimal convergence rate for the scheme  under regular assumptions.  In Section 5, we present numerical results  for the stochastic Allen-Cahn equation  to validate our main theoretical results. 

\section{Preliminaries}
In this section, we first describe some notations and a few lemmas which will be used in our analysis, and then we present several general assumptions  for the problem under consideration. 
\subsection{Notations}
We begin with notations.  Let $U$ and $V$ be separable Hilbert spaces.  We denote the norm in $L^p(\Omega, \mathcal{F}, \mathbf{P}; U)$ by $\|\cdot\|_{L^p(\Omega; U)}$, that is,
$$
\|Y\|_{L^p(\Omega;U)}=\big(\mathbf{E}\big[\|Y\|_U^p\big]\big)^{\frac{1}{p}},\; \; Y\in L^p(\Omega, \mathcal{F}, \mathbf{P}; U).
$$
Denote by $L_1(U,V)$ the nuclear operator space from $U$ to $V$ and for $T\in L_1(U,V)$, its norm is given by
$$
\|T\|_{L_1}=\sum\limits_{i=1}^\infty |(Te_i,e_i)_U|\ \;\text{and}\;\; Tr(T)=\sum\limits_{i=1}^\infty (Te_i,e_i)_U
$$
for any orthonormal basis $\{e_i\}$ of $U$. In particular, if $T>0$, then $\|T\|_{L^1}=Tr(T)$. In this work, we assume that $W^Q(t)$ is of trace class, i.e.
$Tr(Q)<\infty$.  Let $L_2(U,V)$ be the Hilbert-Schmidt space such that for any $T\in L_2(U,V)$
$$
\|T\|_{L_2}=\bigg(\sum\limits_{i=1}^\infty\|Te_i\|^2\bigg)^{1/2}<\infty.
$$
Moreover, if $Q$ is of trace class, we introduce $L_2^0=L_2(U_0, V)$ with norm 
$$
\|T\|_{L_2^0}=\|TQ^{1/2}\|_{L_2(U,V)},
$$
 where $U_0=Q^{1/2}(U)$.

 The following properties are frequently used hereafter 
 $$
 \|ST\|_{L_2}\leq \|S\|_{L_1}\|T\|_{L_2}, \; \|TS\|_{L_2}\leq \|T\|_{L_2}\|S\|_{L_1}\ \; S\in L_1(U,V),\ T\in L_2(U,V). 
 $$
Finally, when no confusion arises, we will drop the spatial dependency from the notations, i.e., $u(t)=u(t,x)$.
\subsection{Some useful lemmas}
We start with the Burkhold-Davis-Gundy-type inequality, which is a generalization of Ito isometry. 
{\begin{lemma}\cite[Theorem 6.1.2]{LiuR05}
For any $p\geq 2, 0\leq t\leq T$, and for any predictable stochastic process $\Phi(\sigma)$ which satisfies
\begin{align*}
\int_0^T\bigg(\mathbf{E}\|\Phi(\sigma)\|_{L_2^0}^p\bigg)^{2/p}d\sigma<\infty,
\end{align*}
we have
\begin{align*}
\bigg(\mathbf{E}\bigg(\sup\limits_{t\in [0,T]}\bigg\|\int_{0}^{t}\Phi(\sigma)dW(\sigma)\bigg\|^p\bigg)\bigg)^{\frac{1}{p}}\leq C(p)\bigg(\int_0^T\bigg(\mathbf{E}\|\Phi(\sigma)\|_{L_2^0}^p\bigg)^{2/p}d\sigma\bigg)^{\frac{1}{2}},\end{align*}
where $\displaystyle{C(p)=p\bigg(\frac{p}{2(p-1)}}\bigg)^{\frac{1}{2}}$. 
\end{lemma}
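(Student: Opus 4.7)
The plan is to obtain the estimate in three moves: first pass from the supremum of the Hilbert-space-valued stochastic integral to the terminal value via Doob's maximal inequality, then extract the quadratic variation by applying It\^o's formula to the square of the norm combined with the scalar Burkholder--Davis--Gundy inequality, and finally interchange the $L^p(\Omega)$-norm with the time integral by Minkowski's integral inequality.

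Concretely, set $M(t):=\int_0^t \Phi(\sigma)\,dW(\sigma)$, which is a continuous $V$-valued martingale with quadratic variation process $[M]_t=\int_0^t \|\Phi(\sigma)\|_{L_2^0}^{2}\,d\sigma$. Since $\|M(t)\|$ is a non-negative submartingale, Doob's $L^p$-inequality would yield
$$
\Bigl(\mathbf{E}\sup_{t\in[0,T]}\|M(t)\|^{p}\Bigr)^{1/p}\le \frac{p}{p-1}\bigl(\mathbf{E}\|M(T)\|^{p}\bigr)^{1/p}.
$$
Next I would apply It\^o's formula to $\|M(t)\|^{2}$ (via Galerkin projection onto finite-dimensional subspaces and passage to the limit if needed), producing the identity $\|M(T)\|^{2}=2\int_0^T(M(\sigma),\Phi(\sigma)\,dW(\sigma))_V+\int_0^T\|\Phi(\sigma)\|_{L_2^0}^{2}\,d\sigma$. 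The scalar martingale on the right has quadratic variation dominated by $\int_0^T\|M(\sigma)\|^{2}\|\Phi(\sigma)\|_{L_2^0}^{2}\,d\sigma$, so the classical scalar BDG inequality, followed by Cauchy--Schwarz and Young's inequality to absorb the $\sup_{t}\|M(t)\|$ factor back into the left-hand side, gives $(\mathbf{E}\|M(T)\|^{p})^{1/p}\lesssim (\mathbf{E}[M]_T^{p/2})^{1/p}$; tracking constants carefully here is what yields the precise prefactor $C(p)=p\sqrt{p/(2(p-1))}$.

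The last step is Minkowski's integral inequality in the form
$$
\Bigl(\mathbf{E}\Bigl(\int_0^T\|\Phi(\sigma)\|_{L_2^0}^{2}\,d\sigma\Bigr)^{p/2}\Bigr)^{1/p}\le\Bigl(\int_0^T\bigl(\mathbf{E}\|\Phi(\sigma)\|_{L_2^0}^{p}\bigr)^{2/p}\,d\sigma\Bigr)^{1/2},
$$
valid because $p/2\ge 1$, which finishes the bound.

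The main obstacle is the bookkeeping required to produce the sharp constant $C(p)$: the combination of Doob's inequality with It\^o's formula and the scalar BDG estimate has to be arranged so that the various factors of $p$, $p-1$ and $1/2$ combine exactly, and one must be careful that the It\^o formula is legitimately applicable in the Hilbert-space setting (which is why one typically argues first for simple predictable $\Phi$ with values in a finite-dimensional subspace, verifies the estimate there, and then extends by a density and dominated-convergence argument using the integrability hypothesis $\int_0^T(\mathbf{E}\|\Phi(\sigma)\|_{L_2^0}^{p})^{2/p}\,d\sigma<\infty$).
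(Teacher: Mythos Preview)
The paper does not supply its own proof of this lemma: it is simply quoted from \cite[Theorem 6.1.2]{LiuR05} and used as a black box throughout. So there is nothing in the paper to compare your argument against.

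That said, your outline is essentially the standard route taken in the cited reference: reduce to the terminal value by Doob's maximal inequality, control $\mathbf{E}\|M(T)\|^{p}$ in terms of the quadratic variation via It\^o's formula in the Hilbert space (with the usual finite-dimensional approximation and passage to the limit), and then invoke Minkowski's integral inequality (using $p/2\ge 1$) to pull the $L^{p}(\Omega)$-norm inside the time integral. One remark: in \cite{LiuR05} the middle step is carried out by applying It\^o's formula directly to $(\|M_t\|^{2}+\varepsilon)^{p/2}$ rather than to $\|M_t\|^{2}$ followed by the scalar BDG inequality; this is how the exact constant $C(p)=p\bigl(p/(2(p-1))\bigr)^{1/2}$ drops out cleanly. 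Your variant via the scalar BDG inequality is perfectly valid for obtaining an inequality of the stated form, but if you want to reproduce precisely that constant you would be better served following the direct $p$-th power computation, since the scalar BDG route tends to introduce an additional universal factor that does not match.
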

}

We recall the following generalized Gronwall's inequality and its discretized version:
\begin{lemma}(Generalized Gronwall's lemma \cite{DixonM86})
Let $T>0$ and $C_1,C_2\geq 0$ and let $\phi$ be a nonnegative and consitnuous function. Let $\beta>0$. If we have
$$
\phi(t)\leq C_1+C_2\int_0^t (t-s)^{\beta-1}\phi(s)ds,
$$
then there exists a constant $C=C(C_2,T,\beta)$ such that
$$
\phi(t)\leq CC_1.
$$
\end{lemma}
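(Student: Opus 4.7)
The plan is to prove this by iterating the integral inequality and summing a Mittag-Leffler type series, which is the classical route for fractional Gronwall-type inequalities. First I would substitute the entire bound $C_1+C_2\int_0^s(s-r)^{\beta-1}\phi(r)\,dr$ for $\phi(s)$ inside the right-hand side and then apply Fubini's theorem together with the Beta-function identity
\begin{equation*}
\int_r^t(t-s)^{\beta-1}(s-r)^{\beta-1}\,ds = B(\beta,\beta)(t-r)^{2\beta-1} = \frac{\Gamma(\beta)^2}{\Gamma(2\beta)}(t-r)^{2\beta-1}
\end{equation*}
to see that one iteration raises the exponent of the kernel by $\beta$ and produces a gamma-function coefficient.

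Iterating $n$ times and keeping track of the constants by induction, I expect to arrive at an estimate of the form
\begin{equation*}
\phi(t)\leq C_1\sum_{k=0}^{n-1}\frac{(C_2\Gamma(\beta))^k\,t^{k\beta}}{\Gamma(k\beta+1)} + \frac{(C_2\Gamma(\beta))^n}{\Gamma(n\beta)}\int_0^t(t-s)^{n\beta-1}\phi(s)\,ds.
\end{equation*}
The partial sum is controlled by the Mittag-Leffler function $E_\beta\bigl(C_2\Gamma(\beta)T^\beta\bigr)$, which is finite for every $\beta>0$ since $\Gamma(k\beta+1)$ grows super-geometrically. Since $\phi$ is continuous on $[0,T]$, it is bounded by some finite $M$, so the remainder term is at most $M(C_2\Gamma(\beta))^n T^{n\beta}/\Gamma(n\beta+1)$, which tends to $0$ as $n\to\infty$. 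Passing to the limit yields $\phi(t)\leq C_1 E_\beta\bigl(C_2\Gamma(\beta)T^\beta\bigr)=:CC_1$ with $C$ depending only on $C_2$, $T$, $\beta$, as claimed.

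The main technical obstacle is the careful bookkeeping in the induction step: one has to verify that repeated application of Fubini produces exactly the kernel $(t-s)^{n\beta-1}$ with coefficient $\Gamma(\beta)^n/\Gamma(n\beta)$, which rests on the iterated Beta identity $B(k\beta,\beta)=\Gamma(k\beta)\Gamma(\beta)/\Gamma((k+1)\beta)$. A secondary subtle point is the case $0<\beta<1$, where the kernel is singular at $s=t$; the singularity is integrable since $\beta>0$, so all manipulations remain valid, but one should be mindful of this when invoking Fubini. An alternative route would be a weighted-norm Picard-type contraction on $C([0,T])$ with weight $e^{-\lambda t}$ for large $\lambda$, but the direct iteration is more transparent and yields the explicit Mittag-Leffler constant.
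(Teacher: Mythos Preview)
Your iteration argument is correct and is the standard proof of this weakly singular Gronwall inequality: the Beta-function bookkeeping, the Mittag--Leffler bound on the partial sums, and the vanishing of the remainder via boundedness of $\phi$ on $[0,T]$ are all in order, including in the singular range $0<\beta<1$.

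However, there is nothing to compare against: the paper does not prove this lemma at all. It is quoted from Dixon and McKee \cite{DixonM86} and used as a black box. So your proposal is not matching or diverging from the paper's proof --- it is supplying a proof where the paper simply cites the literature.
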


\subsection{Assumptions}
We describe below our main assumptions.
\begin{assumption}\label{assump:1}
(Operator $A$) The linear operator $-A:dom(A)\subset H\to H$ is densely defined, self-adjoint and positive definite with compact inverse.
\end{assumption}

Under this assumption, the operator $A$ generates an analytic semigroup $E(t)=e^{tA}, t\geq 0$ on $H$ and  the fractional powers of $(-A)$ and its domain $H^{r}:=\text{dom}((-A)^{r/2})$ for all $r\in \mathbb{R}$ equipped with inner product $(\cdot,\cdot)_r=((-A)^{r/2}\cdot, (-A)^{r/2}\cdot)$ and 
the induced norm $\|\cdot\|_r=(\cdot,\cdot)_r^{1/2}$. In particular, we denote $\|\cdot\|=\|\cdot\|_0$.
 Let $L_{2,r}^0=L_2(U_0, H^r)$ with norm $\|T\|_{L_{2,r}^0}=\|(-A)^{r/2}T\|_{L_2^0}$.
Moreover, the following inequalities holds (cf. \cite[Theorem 6.13]{Pazy83}, \cite{Kruse14}).
\begin{align}
(i)&\   \text{For\ any}\ \mu\geq 0, \ \text{it\ holds\ that}\qquad\qquad\qquad\qquad\qquad\qquad\qquad\qquad\qquad \qquad\qquad\notag\\
&\qquad\qquad\qquad (-A)^\mu E(t)v=E(t)(-A)^\mu v,\ \ \text{for} \ v\in {H}^{2\mu},\notag\\
&\ \text{and\ there\ exists\ a \ constant\ C\ such \ that}\notag\\
&\qquad\qquad\qquad \|(-A)^\mu E(t)\|\leq Ct^{-\mu},\ \ t>0; \label{eq:ineq1}\\
(ii)&\ \text{For\ any}\ 0\leq\nu\leq 1, \ \text{there\ exists\ a\ constant\ C\ such\ that}\notag\\
&\qquad\qquad\qquad \|(-A)^{-\nu}(E(t)-I)\|\leq Ct^\nu,\ t>0. \label{eq:ineq2}
\end{align}
\
\begin{assumption} \label{assump:2}
(Nonlinearity) Let $F(v)(x)$ be a Nemytskii operator defined by $F(v)(x):=f(v(x))=\sum\limits_{j=0}^Pa_j (v(x))^j, a_j\in\mathbb{R} $ where $P$ is an  odd integer with $a_P<0$ such that 
the following coercivity and one-sided Lipschitz condition hold
\begin{align}\label{fcoer}
&\langle f(u),u\rangle \leq -\theta \|u\|^\alpha+K\|u\|^2,
\quad \text{ for some }\; \theta, {\alpha}, K>0; \notag\\
&\langle f(u)-f(v), u-v\rangle\leq L\|u-v\|^2, \quad L>0, \ u, v\in L^{2P}(\mathcal{O}). 
\end{align}
for some $L>0$. 
\end{assumption}


\begin{assumption}\label{assump:3}
(Linear growth and Lipschitz condition for $g$) Given $\gamma>1$. The mapping $g(v)$ satisfies 
\begin{align*}
\|g(u)\|_{L_{2,\nu}^0}\leq c\|u\|_{\nu}, \ \ u\in H^{\nu}(\mathcal{O})
\end{align*}
with  $\nu=0$ and $\nu=\gamma$, and 
\begin{align*}
\|g(u)-g(v)\|\leq c\|u-v\|, \ u,v\in L^2(\mathcal{O}),
\end{align*}

\end{assumption}

\begin{rem} 
The paramter $\gamma$ essentially determines (see Theorem \ref{prop:H1r} below)  spatial regularity. It is clear that linear functions satisfy the assumption which  relaxes  the sublinear growth condition of $g$ to some extent (cf. \cite{AntoKM16}). 

\end{rem}

\begin{assumption}\label{assump:4}
(Initial condition)  Let  $\gamma>1$ be the same as in {\bf Assumption \ref{assump:3}}. We assume that the initial condition $u_0$ is $\mathscr{F}_0/\mathscr{B}(H^{\gamma})$-measurable and 
$$
\mathbf{E}\|u_0\|_{\gamma}^p<\infty,\ \ p\geq 2.
$$
\end{assumption}

Under Assumptions \ref{assump:1}-\ref{assump:4} and $u_0\in C(\mathcal{O})$, there exists a unique predictable process $u$ (cf. \cite{Cerrai03, LiuR10}) such that for any $p\geq 1$, one has
\begin{align}\label{eq:uq}
\mathbf{E}\sup\limits_t\|u(t)\|^p<\infty. 
\end{align}
Based upon it, one further infers that 
\begin{align}\label{eq:fu}
\mathbf{E}\sup\limits_t\|f(u(t))\|<\infty. 
\end{align}

\begin{rem} (on the well-posedness of \eqref{model:AC})
\begin{itemize}
\item One may obtain the well-posedness of \eqref{model:AC} with \eqref{eq:uq} by virtue of the variational approach \cite[Chap. 5 and Appendix G]{LiuR05}. 
\item If both $f$ and $g$ are globally Lipschitz continuous with linear growth condition, then the well-posedness of \eqref{model:AC} is standard and has been provided in, for instance, \cite[Chap. 2]{Kruse14}.

\item If  $f(v)$ is a polynomial of degree $P$,
to guarantee the existence and uniqueness of solution for \eqref{model:AC} for cylindrical white noise (cf. \cite{Cerrai03}),  $g(u)$ is required to have the following restriction
$$
\|g(u)\|\leq C(1+\|u\|^{1/P}), \ u\in H.
$$
 

\item The assumption 2.4 on initial condition is not essential since one may alleviate the assumption by exploring the smoothing effect of $E(t)$. \end{itemize}
\end{rem}


\subsection{Spatial regularity of $u$}
We proceed to exploit the regularity of the solution \eqref{eq:truesol} under these assumptions. 
We note that an optimal spatial regularity has been established for additive noise under the conditions $\|(-A)^{\frac{\gamma-1}{2}}Q^{\frac{1}{2}}\|<\infty $ (cf. \cite{BrehierCH19,QiW19}). To simplify the notation, we shall omit the dependence on $x$ when no confusion can arise.

\begin{theorem}\label{prop:H1r}
 Under  Assumptions \ref{assump:1}-Assumption \ref{assump:4},  the unique mild solution $u(t)$ of \eqref{model:AC} satisfies 

\begin{align*} 
\mathbf{E}\sup\limits_t\|u(t)\|_\gamma^p,\; \mathbf{E}\sup\limits_t\|f(u(t))\|_{\gamma}^p <\infty\quad\forall p\geq 2.
\end{align*}
\end{theorem}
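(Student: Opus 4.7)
\medskip
\noindent\textbf{Proof plan.} The natural starting point is the mild formulation \eqref{eq:truesol}. Applying $(-A)^{\gamma/2}$ and taking $\mathbf{E}\sup_{t\in[0,T]}\|\cdot\|^p$ splits the estimate into three pieces. The initial-data contribution $E(t)u_0$ is immediate from the $H^\gamma$-contractivity of $E(t)$ (Assumption \ref{assump:1}) together with Assumption \ref{assump:4}. For the stochastic convolution $\int_0^t E(t-s)g(u(s))dW^Q(s)$, I would apply the factorization method of Da Prato--Kwapien--Zabczyk to convert the stochastic convolution into a deterministic smoothing kernel acting on an ordinary stochastic integral, then combine the BDG-type inequality (Lemma~2.1) with Assumption \ref{assump:3} at $\nu=\gamma$, i.e.\ $\|(-A)^{\gamma/2}g(u)\|_{L_2^0}=\|g(u)\|_{L_{2,\gamma}^0}\leq c\|u\|_\gamma$, to obtain a bound that is linear in $\mathbf{E}\|u(s)\|_\gamma^p$.

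\medskip
The real challenge is the deterministic convolution $\int_0^t E(t-s)f(u(s))ds$ in $H^\gamma$ for possibly large $\gamma$. Here the naive estimate $\|(-A)^{\gamma/2}E(t-s)\|\leq C(t-s)^{-\gamma/2}$ is not integrable once $\gamma\geq 2$, so I would split $(-A)^{\gamma/2}=(-A)^{1-\eta}(-A)^{\gamma/2-1+\eta}$ with small $\eta>0$: the first factor contributes the integrable singularity $(t-s)^{-(1-\eta)}$ through \eqref{eq:ineq1}, and the second requires controlling $\|(-A)^{\gamma/2-1+\eta}f(u(s))\|=\|f(u(s))\|_{\gamma-2+2\eta}$. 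Since $d\leq 2$ and $\gamma>1$ one has the Sobolev embedding $H^\gamma\hookrightarrow L^\infty(\mathcal{O})$, so that $H^\gamma$ is a Banach algebra and the polynomial structure of $f$ yields the Moser-type composition bound $\|f(v)\|_s\leq C(\|v\|_{L^\infty})(1+\|v\|_s)$ for $s\in[0,\gamma]$. This keeps the right-hand side \emph{linear} in $\|u(s)\|_\gamma$ up to a random prefactor depending on $\|u\|_{L^\infty}$.

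\medskip
To finish, I must show this $L^\infty$-prefactor has all finite $p$-moments. Starting from \eqref{eq:uq} and the coercivity \eqref{fcoer}, I would upgrade $\mathbf{E}\sup_t\|u(t)\|^p<\infty$ to $\mathbf{E}\sup_t\|u(t)\|_{L^{2q}}^p<\infty$ for every $q\geq 1$ by applying It\^o's formula to $\|u^N(t)\|_{L^{2q}}^{2q}$ on a Galerkin truncation $u^N$ and exploiting the dissipative term $-\theta\|u\|^\alpha$ in \eqref{fcoer} to absorb the polynomial $\langle f(u),u^{2q-1}\rangle$. A bootstrap in the mild formulation, using the ultracontractive bound $\|E(t)\|_{L^2\to L^\infty}\leq Ct^{-d/4}$, then converts these $L^{2q}$ moments into $\mathbf{E}\sup_t\|u(t)\|_{L^\infty}^p<\infty$ for all $p$. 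Plugging back, the three estimates combine into an integral inequality of the form
\begin{equation*}
\phi(t)\ \leq\ C_0 \ +\ C_1\int_0^t (t-s)^{\eta-1}\phi(s)\,ds\ +\ C_2\int_0^t \phi(s)\,ds,\qquad \phi(t):=\mathbf{E}\sup_{r\leq t}\|u(r)\|_\gamma^p,
\end{equation*}
which is closed by the generalized Gronwall lemma (Lemma~2.3). The bound on $\mathbf{E}\sup_t\|f(u(t))\|_\gamma^p$ is then an immediate consequence of the Moser inequality together with the finite moments of $\|u\|_\gamma$ and $\|u\|_{L^\infty}$.

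\medskip
\noindent\textbf{Main obstacle.} The principal difficulty is reconciling the polynomial growth of $f$ with the fractional semigroup smoothing in the high-order norm $H^\gamma$: the smoothing factor $(t-s)^{-\gamma/2}$ is not integrable as soon as $\gamma\geq 2$, forcing the splitting of $(-A)^{\gamma/2}$, which in turn requires Moser-type control of $f(u)$ in intermediate Sobolev norms and therefore $L^\infty$ control on $u$. Supplying that $L^\infty$ control from scratch, using only the coercivity \eqref{fcoer} and the low-regularity moment bound \eqref{eq:uq}, is the delicate step; everything else is a routine combination of semigroup smoothing, BDG, and generalized Gronwall.
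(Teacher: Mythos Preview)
Your plan is correct and would succeed, but it takes a heavier route than the paper. The paper's key simplification is an \emph{inductive bootstrap in the regularity index}. It first treats $\gamma\in(1,2)$, where $(t-s)^{-\gamma/2}$ is integrable and the nonlinear term requires only the a priori bound $\mathbf{E}\sup_t\|f(u(t))\|^p<\infty$ from \eqref{eq:fu} (taken as a consequence of the well-posedness \eqref{eq:uq}); this already gives $u\in L^p(\Omega;H^\gamma)$. Since $H^\gamma$ is a Banach algebra for $\gamma>1$ in $d\le 2$, one immediately obtains $f(u)\in L^p(\Omega;H^\gamma)$ as well---no Moser estimate, no separate $L^\infty$ argument. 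This regularity of $f(u)$ then feeds the next step: for $\gamma'\in[\gamma,\gamma+2)$ one writes $(-A)^{\gamma'/2}=(-A)^{(\gamma'-\gamma)/2}(-A)^{\gamma/2}$, keeping the semigroup singularity integrable, and iterates up to the prescribed value.

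The upshot is that the obstacle you single out---supplying $L^\infty$ moments from scratch via It\^o's formula on $L^{2q}$ norms and ultracontractivity---is bypassed entirely in the paper: it is absorbed into the assumed bound \eqref{eq:fu}, and the Banach algebra property replaces your Moser machinery at every subsequent stage. Your direct approach is valid and arguably more self-contained (you derive the pointwise control rather than take it as given), and your use of the factorization method to put $\sup_t$ inside the expectation for the stochastic convolution is in fact more careful than the paper's pointwise-in-$t$ estimate; but the overall cost is substantially more work for the same conclusion.
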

\begin{proof}
We start with \eqref{eq:truesol}. For any $t>0$
\begin{align}\label{eq:ut1}
\|u(t)\|_{L^p(\Omega; H^{\gamma})}
&\leq \big\|(-A)^{\frac{\gamma}{2}}E(t)u_0\big\|_{L^p(\Omega;H)}+\bigg\|(-A)^{\frac{\gamma}{2}}\int_0^t E(t-\sigma)f(u(\sigma))d\sigma\bigg\|_{L^p(\Omega;H)}\notag\\
&\quad+\bigg\|(-A)^{\frac{\gamma}{2}}\int_0^t E(t-\sigma)g(u)dW^Q(\sigma)\bigg\|_{L^p(\Omega;H)}.
\end{align}
The assumption on $u_0: \Omega\to {H}^{\gamma}$ implies the bound for the first term
\begin{align}\label{eq:e1}
\big\|(-A)^{\frac{\gamma}{2}}E(t)u_0\big\|_{L^p(\Omega;H)}\leq \|u_0\|_{L^p(\Omega; {H}^{\gamma})}<C.
\end{align}
For the last term in \eqref{eq:ut1}, we use the Burkholder-Davis-Gundy inequality, Assumption \ref{assump:3} and generalized Gronwall inequality to obtain
\begin{align}\label{eq:e3}
&\bigg\|\int_{0}^t (-A)^{\frac{\gamma}{2}} E(t-\sigma)g(u(\sigma))dW^Q(\sigma)\bigg\|_{L^p(\Omega;H)}\notag\\
&\leq C(p)\bigg(\int_{0}^t \big(\mathbf{E}\big\|(-A)^{\frac{\gamma}{2}} E(t-\sigma)g(u(\sigma))\big\|_{L_2^0}^p\big)^{\frac{2}{p}}d\sigma\bigg)^{\frac{1}{2}}\notag\\
&\leq C(p)\bigg(\int_{0}^t \big(\mathbf{E}\big\|(-A)^{\frac{1-\epsilon}{2}} E(t-\sigma)(-A)^{\frac{\gamma-1+\epsilon}{2}}g(u(\sigma))\big\|_{L_2^0}^p\big)^{\frac{2}{p}}d\sigma\bigg)^{\frac{1}{2}}\notag\\
&\leq C\bigg(\int_{0}^t (t-\sigma)^{\epsilon-1} \big(\mathbf{E}\|g(u(\sigma))\|_{L^0_{2,\gamma-1+\epsilon}}^p\big)^{\frac{2}{p}}d\sigma\bigg)^{\frac{1}{2}}\notag\\
&\leq C\bigg(\int_{0}^t  (t-\sigma)^{\epsilon-1} \big(\mathbf{E}\|(u(\sigma))\|_{L^0_{2,\gamma-1+\epsilon}}^p\big)^{\frac{2}{p}}d\sigma\bigg)^{\frac{1}{2}}
\leq C\bigg(\int_{0}^t (t-\sigma)^{\epsilon-1}  \|u(\sigma)\|_{L^p(\Omega; H^\gamma)}^2d\sigma\bigg)^{\frac{1}{2}}
\end{align}

It remains to bound the second term in \eqref{eq:ut1}. Towards this end, we consider $\gamma$ in differently intervals separately as follows. 
 (i) Case  $\gamma\in (1,2)$:
\begin{align}\label{eq:e21}
\bigg\|(-A)^{\frac{\gamma}{2}}\int_0^t E(t-\sigma)f(u(\sigma))d\sigma\bigg\|_{L^p(\Omega;H)}
\leq \int_0^t (t-\sigma)^{-\frac{\gamma}{2}}\|f(u(\sigma))\|_{L^p(\Omega;H)}d\sigma
<C.
\end{align}
Therefore, $u(t)\in L^p(\Omega; H^\gamma)$ by the Gronwall's inequality using \eqref{eq:e1},\eqref{eq:e21} and \eqref{eq:e3}.  

Since $\gamma>1$, we have $H^\gamma(\mathcal{O})$ is a Banach algebra for $d=1,2$ (cf. \cite[Page 106]{Adams75}). Hence, 
$\mathbf{E}\sup\limits_t\|f(u(t))\|_{\gamma}^p<\infty$.

 (ii) Case $\gamma=2$:
 
 From the previous case, one has $u(t)\in L^p(\Omega; H^\mu  )$ for some $\mu\in (1,2)$. Hence,
\begin{align}\label{eq:e22}
&\bigg\|\int_0^t E(t-\sigma)Af(u(\sigma))d\sigma\bigg\|_{L^p(\Omega;H)}
=\bigg\|\int_0^t E(t-\sigma)(-A)^{1-\frac{\mu}{2}} (-A)^{\frac{\mu}{2}}f(u(\sigma))d\sigma\bigg\|_{L^p(\Omega;H)}\notag\\
&\leq \int_0^t (t-\sigma)^{\frac{\mu}{2}-1} \|f(u(\sigma))\|_{L^p(\Omega;H^\mu)}d\sigma<C.
\end{align}
Therefore, $u(t)\in L^p(\Omega; H^2)$ by the generalized Gronwall's inequality using \eqref{eq:e1},\eqref{eq:e22} and \eqref{eq:e3}, and by the same reason as in the previous case, 
$\mathbf{E}\sup\limits_t\|f(u(t))\|_{2}^p<\infty$.

  (iii) Case $\gamma\in (2,4)$:
   
 By virtue of the results of the previous case, 
  \begin{align}\label{eq:e2}
&\bigg\|\int_0^t (-A)^{\frac{\gamma}{2}}E(t-\sigma) f(u(\sigma))d\sigma\bigg\|_{L^p(\Omega;H)}
\leq \int_0^t (t-\sigma)^{1-\frac{\gamma}{2}}\|f(u(\sigma)\|_{L^p(\Omega;H^2)}d\sigma<C.
\end{align}
 
 We repeat the above process for arbitrarily large $\gamma$ as long as both \eqref{eq:e1} and \eqref{eq:e3} hold or Assumptions \ref{assump:3} and \ref{assump:4} hold. 

The proof is completed. 
\end{proof}
\begin{rem}
This theorem lifts an essential restriction on $\gamma$  in \cite{CuiH19, KovacsLL15, KovacsLL18, LarssonM11, QiW19}, and allows us to obtain higher-order convergence in space, as opposed to the low-order convergence rate of linear FEM approximation considered in \cite{CuiH19, KovacsLL15, KovacsLL18, LarssonM11, QiW19}. 
\end{rem}



The next lemma establishes a local Lipschitz continuity for the nonlinear $f$. 
{
\begin{lemma}\label{lem:f}
Let $\gamma>1$. Then, under the assumption 2.2,  we have 
\begin{align}\label{eq:contf}
\|f(u)-f(v)\|\leq  C(1+\|u\|_{\gamma}^{P-1}+\|v\|_{\gamma}^{P-1})\|u-v\|, \ \ u,v\in H^{\gamma}(\mathcal{O}),
\end{align}
where $C$ is independent of $u$ and $v$.
\end{lemma}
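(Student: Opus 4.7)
The idea is to exploit the polynomial structure of $f$ and the Sobolev embedding $H^\gamma(\mathcal{O}) \hookrightarrow L^\infty(\mathcal{O})$, which holds precisely when $\gamma > d/2$ and hence under our hypothesis $\gamma > 1$ for $d=1,2$.

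First I would factor the polynomial difference algebraically. Writing $f(\xi) = \sum_{j=0}^P a_j \xi^j$, one has for any scalars $a,b$ the identity
\begin{equation*}
a^j - b^j = (a-b)\sum_{k=0}^{j-1} a^k b^{j-1-k},
\end{equation*}
so that pointwise in $\xi \in \mathcal{O}$,
\begin{equation*}
\bigl|f(u(\xi)) - f(v(\xi))\bigr| \leq C\,|u(\xi) - v(\xi)|\bigl(1 + |u(\xi)|^{P-1} + |v(\xi)|^{P-1}\bigr),
\end{equation*}
with $C$ depending only on the coefficients $a_0,\dots,a_P$.

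Next I would take the $L^2(\mathcal{O})$ norm of both sides. Applying H\"older's inequality to pull the polynomial factor out in $L^\infty$ gives
\begin{equation*}
\|f(u) - f(v)\| \leq C\bigl(1 + \|u\|_{L^\infty}^{P-1} + \|v\|_{L^\infty}^{P-1}\bigr)\,\|u - v\|.
\end{equation*}

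Finally, the Sobolev embedding $H^\gamma(\mathcal{O}) \hookrightarrow L^\infty(\mathcal{O})$ (valid because $\gamma > 1 \geq d/2$ in dimensions $d=1,2$, the same embedding already invoked in the proof of Theorem \ref{prop:H1r}) yields $\|u\|_{L^\infty} \leq C\|u\|_\gamma$ and similarly for $v$, which immediately gives \eqref{eq:contf}.

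There is really no main obstacle here; the estimate is a direct combination of the polynomial factorization with the critical embedding. The only subtle point is ensuring $\gamma > d/2$, but this is automatic from the standing assumption $\gamma > 1$ and the restriction $d \leq 2$ imposed on the spatial dimension in \eqref{model:AC}.
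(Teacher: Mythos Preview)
Your proof is correct and follows essentially the same approach as the paper: both arguments use the algebraic factorization $a^j - b^j = (a-b)\sum_{k=0}^{j-1} a^k b^{j-1-k}$ to extract the factor $u-v$, then pull the remaining polynomial factor out in $L^\infty$ and invoke the Sobolev embedding $H^\gamma(\mathcal{O})\hookrightarrow L^\infty(\mathcal{O})$ for $\gamma>1$ in $d\le 2$. The only cosmetic difference is that you phrase the intermediate step as a pointwise bound followed by H\"older, whereas the paper works directly with norms.
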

\begin{proof}
Under the assumption on $\gamma$, we have $H^{\gamma}(\mathcal{O})\hookrightarrow L^\infty(\mathcal{O})$. Hence,
\begin{align*}
\|f(u)-f(v)\|&=\bigg\|\sum\limits_{j=0}^P a_j(u^j-v^j)\bigg\|\notag\\
&=\bigg\|(u-v)\sum\limits_{j=1}^Pa_j(u^{j-1}+u^{j-2}v+\cdots+v^{j-1})\bigg\|\notag\\
&\leq \|u-v\|\sum\limits_{j=1}^P |a_j| \bigg(\|u\|^{j-1}_{L^\infty}+\|u\|^{j-2}_{L^\infty}\|v\|_{L^\infty}+\cdots+\|v\|^{j-1}_{L^\infty}\bigg)\notag\\
&\leq C(1+\|u\|_{L^\infty}^{P-1}+\|v\|_{L^\infty}^{P-1})\|u-v\|\notag\\
&\leq C(1+\|u\|_{\gamma}^{P-1}+\|v\|_{\gamma}^{P-1})\|u-v\|.
\end{align*}
\end{proof} }


\section{Spatial semi-discretization}
We describe below our spatial semi-discretization  and  carry out  an error analysis. We assume $\mathcal{O}=(0,1)^d,\;(d=1,2)$.
\subsection{Spatial semi-discretization}
 Let $\mathcal{P}_N$ be the space of polynomials on $\mathcal{O}$ with degree at most $N$ in each direction and  $V_N=\{v|v\in \mathcal{P}_N, v|_{\partial\mathcal{O}}=0\}$.
We define $P_N: H^{-1}\to V_N$  a generalized projection by (cf. \cite{Kruse14}):
\begin{align}
(P_N v, y_N)=(\nabla A^{-1}v, \nabla y_N), \ \forall v\in H^{-1},\ y_N\in  V_N.
\end{align}
It is clear that for $v\in L^2(\mathcal{O})$, we have 
\begin{align*}
	(P_N v, y_N)=(v,  y_N), \ \forall  y_N\in  V_N,
\end{align*}
from which we derive \cite{BerM97}
\begin{align}\label{errorPN}
	\|P_Nv-v\|\le \inf_{v_N\in V_N}\|v_N-v\|\le CN^{-r}\|u\|_r,\quad\forall r>0.
\end{align}
 We introduce a discrete operator $A_N: V_N\to V_N$ defined by
 $$
 \langle A_Nv_N, \chi_N\rangle:=-((-A)^{1/2} v_N, (-A)^{1/2} \chi_N),\ \ \forall v_N,\chi_N\in V_N.
 $$
Then the spectral Galerkin approximation of \eqref{model:AC} yields
\begin{align}\label{eq:semidis}
du_N=A_Nu_Ndt+P_Nf(u_N)dt+P_Ng(u_N)dW^Q(t),\;\; u_N(0)=P_Nu_0.
\end{align}
Similar as the continuous case, there exists a unique mild solution $u_N$ to \eqref{eq:semidis} which can be written as
\begin{align}\label{eq:uN}
u_N(t)=E_N(t)P_Nu_0+\int_0^t E_N(t-s)P_Nf(u_N(s))ds+\int_0^t E_N(t-s)P_N\big[g(u_N)dW^Q(s)\big],
\end{align}
where $E_N(t)=e^{tA_N}$. 
Similar to \cite[Lemma 3.9]{Thomee97}, one has the property
\begin{align}\label{eq:ANEN}
\|(-A_N)^\mu E_N v_N\|\leq Ct^{-\mu}\|v_N\| \ \ \text{for\ all}\ t>0, \ v_N\in \mathcal{P}_N,
\end{align}
and  defines  the operator
\begin{align*}
F_N(t):=E_N(t)P_N-E(t).
\end{align*}

\begin{lemma} \label{lem:1}
 Let $0\leq \nu\leq \mu$. Then there exists a constant $C$ such that 
\begin{align*}
\|F_N(t)u\|\leq CN^{-\mu}t^{-\frac{\mu-\nu}{2}}\|u\|_{\nu}, \ \forall u\in {H}^\nu.
\end{align*}
\end{lemma}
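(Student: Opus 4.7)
The plan is to split $F_N(t)$ into a pure $L^2$-projection error and a ``commutator'' remainder, estimate each at the two endpoints $\nu=\mu$ and $\nu=0$, and recover the full range $0\le \nu\le \mu$ by operator interpolation. I would begin with the decomposition
\[
F_N(t)u = \bigl(E_N(t)P_N - P_N E(t)\bigr)u \;+\; (P_N-I)E(t)u.
\]
The second summand is immediate: combining the projection error \eqref{errorPN} with the analytic smoothing \eqref{eq:ineq1} gives
\[
\|(I-P_N)E(t)u\|\le CN^{-\mu}\|E(t)u\|_\mu \le CN^{-\mu}t^{-(\mu-\nu)/2}\|u\|_\nu,
\]
which already matches the target rate.

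For $G_N(t)u := E_N(t)P_N u - P_N E(t)u$ I would use the telescoping/Duhamel identity
\[
G_N(t)u = \int_0^t \frac{d}{ds}\bigl[E_N(t-s)P_N E(s)u\bigr]\,ds = \int_0^t E_N(t-s)\bigl(P_N A - A_N P_N\bigr)E(s)u\,ds,
\]
which isolates the commutator $P_N A - A_N P_N$. Testing against $y_N\in V_N$ and using the defining identities of $P_N$ and $A_N$, one obtains the weak form
\[
\bigl((P_N A - A_N P_N)v,\,y_N\bigr) = -\bigl((-A)^{1/2}(I-P_N)v,\,(-A)^{1/2}y_N\bigr),
\]
so the commutator is controlled by the $H^1$-error of the $L^2$-projection. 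I would then dispatch the two endpoint cases: for $\nu=\mu$, bound $(I-P_N)E(s)u$ in $H^1$ at rate $N^{-(\mu-1)}$ and pair it with the $(t-s)^{-1/2}$ singularity produced by $(-A_N)^{1/2}E_N(t-s)$ via \eqref{eq:ANEN}; for $\nu=0$, pull an additional $s^{-\mu/2}$ out of $(-A)^{\mu/2}E(s)u$ via \eqref{eq:ineq1}. In each case the time integral is finite and yields precisely the rate $N^{-\mu}t^{-(\mu-\nu)/2}$ after carrying the constants through.

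Finally, the general bound for $0\le \nu\le \mu$ follows by operator interpolation between the two endpoint estimates obtained above.

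\textbf{Main obstacle.} The delicate step is the analysis of the commutator $P_N A - A_N P_N$: since $P_N$ is the $L^2$-orthogonal projection onto $V_N$ rather than the elliptic (Ritz) projection, $P_N$ does \emph{not} commute with $A$, and the weak identity above is what allows the commutator to be absorbed by an $H^1$-projection error of $I-P_N$. The second technical point is balancing the two singular kernels $(t-s)^{-1/2}$ (from $(-A_N)^{1/2}E_N(t-s)$) and $s^{-\mu/2}$ (from $(-A)^{\mu/2}E(s)$) inside the Duhamel integral so that it remains integrable at both endpoints and delivers the sharp exponent $N^{-\mu}$ without parasitic logarithmic losses. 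Once these two points are under control, the projection error, semigroup smoothing and interpolation steps are standard.
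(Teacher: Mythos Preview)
Your Duhamel-plus-commutator route has a genuine gap: it loses one full power of $N$ in the integral term and can even diverge at the nonsmooth endpoint. Concretely, the weak identity
\[
\bigl((P_NA-A_NP_N)v,\,y_N\bigr)=-\bigl((-A)^{1/2}(I-P_N)v,\,(-A)^{1/2}y_N\bigr)
\]
forces you to control the $H^1$-error of the $L^2$-projection, which is only $O(N^{-(\mu-1)})$. Pairing this with the kernel $(t-s)^{-1/2}$ from $(-A_N)^{1/2}E_N(t-s)$ gives, at the endpoint $\nu=\mu$,
\[
\|G_N(t)u\|\le CN^{-(\mu-1)}\|u\|_\mu\int_0^t(t-s)^{-1/2}\,ds = CN^{-(\mu-1)}t^{1/2}\|u\|_\mu,
\]
which is one order short of the claimed $N^{-\mu}$. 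At the endpoint $\nu=0$ the situation is worse: pulling out $s^{-\mu/2}$ from $(-A)^{\mu/2}E(s)u$ leaves the kernel $(t-s)^{-1/2}s^{-\mu/2}$, which is \emph{not integrable} at $s=0$ once $\mu\ge 2$, so the nonsmooth-data estimate is not recovered at all by this route. The ``balancing'' you flag as the main obstacle cannot be resolved within this direct bound.

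The paper itself gives no detailed proof but defers to Thom\'ee, Theorem~3.5, whose technique is an \emph{energy argument} rather than a direct Duhamel estimate. The standard implementation splits via the Ritz projection $R_N$ (defined by $A_NR_N=P_NA$): write $F_N(t)u=\theta(t)+\rho(t)$ with $\rho(t)=(R_N-I)E(t)u$ and $\theta(t)=E_N(t)P_Nu-R_NE(t)u\in V_N$. Then $\rho$ is handled exactly as your second summand, while $\theta$ satisfies the error equation
\[
\theta_t-A_N\theta=(P_N-R_N)\,\partial_t\bigl(E(t)u\bigr),\qquad \theta(0)=(P_N-R_N)u.
\]
The crucial gain is that the source involves $P_N-R_N$, and \emph{both} projections have the optimal $L^2$-rate $\|(P_N-R_N)w\|\le CN^{-\mu}\|w\|_\mu$, so no order is lost. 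One then tests the error equation against time-weighted multiples $t^\ell\theta$ and uses the dissipativity of $A_N$ to absorb the singularity in $\partial_tE(t)u$; this is precisely the mechanism that replaces your non-integrable kernel. Your interpolation step between the endpoints is fine and is also how the general $0\le\nu\le\mu$ range is obtained in that framework.
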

\begin{proof}
	Thanks to \eqref{errorPN}, this result can be proved by using the same technique used for finite elements (cf. \cite[Theorem 3.5]{Thomee97}), so we omit the detail here. 
\end{proof}

\begin{lemma}\label{lem:uNgam} 
	
Let Assumptions \ref{assump:1}-\ref{assump:4} hold and $u_N$ is given by \eqref{eq:semidis}. Then, for all $p\geq 2$, 
\begin{align*} 
\mathbf{E}\sup\limits_t\|u_N(t)\|^p<C,
\end{align*}
where $C$ is independent of $N$. 
\end{lemma}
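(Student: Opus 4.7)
The plan is to work with the differential (variational) form of the semi-discrete equation, apply It\^o's formula to a power of the $L^2$ norm, and exploit the coercivity of $f$ together with the linear growth of $g$ to derive an a priori estimate that is uniform in $N$. Since $u_N(t) \in V_N$ is finite-dimensional, applying It\^o's formula is completely rigorous.

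First I would rewrite \eqref{eq:uN} as the SDE in $V_N$
\begin{align*}
du_N = A_N u_N\, dt + P_N f(u_N)\, dt + P_N g(u_N)\, dW^Q,
\qquad u_N(0)=P_Nu_0,
\end{align*}
and apply It\^o's formula to the functional $\varphi(x)=\|x\|^p$ on $V_N$. This yields
\begin{align*}
\|u_N(t)\|^p
&= \|P_N u_0\|^p
+ p\int_0^t \|u_N\|^{p-2}\bigl\langle u_N,\, A_N u_N + P_N f(u_N)\bigr\rangle\, ds \\
&\quad + \frac{p}{2}\int_0^t \|u_N\|^{p-2}\|P_N g(u_N)\|_{L_2^0}^2\, ds
+ \frac{p(p-2)}{2}\int_0^t \|u_N\|^{p-4}\bigl\|(P_N g(u_N))^\ast u_N\bigr\|_{U_0}^2\, ds \\
&\quad + p\int_0^t \|u_N\|^{p-2}\bigl\langle u_N,\, P_N g(u_N)\, dW^Q\bigr\rangle.
\end{align*}
The point is that every term on the right can be bounded by $C\|u_N\|^p$ plus a martingale: the $A_N$-term is nonpositive by self-adjointness and negative semidefiniteness, the $f$-term is controlled by $\langle u_N, f(u_N)\rangle \le K\|u_N\|^2$ (discarding the good $-\theta\|u_N\|^\alpha$ contribution) from Assumption \ref{assump:2}, while $\|P_N g(u_N)\|_{L_2^0}\le \|g(u_N)\|_{L_2^0}\le c\|u_N\|$ follows from the $L^2$-stability of $P_N$ and Assumption \ref{assump:3} (with $\nu=0$). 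The cross term $\|(P_N g(u_N))^\ast u_N\|_{U_0}^2$ is estimated by Cauchy--Schwarz against $\|P_N g(u_N)\|_{L_2^0}^2\|u_N\|^2 \le c^2\|u_N\|^4$.

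Taking expectation kills the stochastic integral and yields the Gronwall-type inequality
\begin{align*}
\mathbf{E}\|u_N(t)\|^p \le \mathbf{E}\|u_0\|^p + C_p\int_0^t \mathbf{E}\|u_N(s)\|^p\, ds,
\end{align*}
so that $\sup_{t\in[0,T]}\mathbf{E}\|u_N(t)\|^p \le C$ with $C$ independent of $N$, using Assumption \ref{assump:4} and $\|P_N\|\le 1$ on $L^2$.

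To upgrade this to the supremum inside the expectation, I would apply the Burkholder--Davis--Gundy inequality to the martingale part: its bracket is controlled by
$\int_0^T \|u_N\|^{2p-4}\|(P_N g(u_N))^\ast u_N\|_{U_0}^2\, ds \le c^2\int_0^T \|u_N\|^{2p}\, ds$,
hence
\begin{align*}
\mathbf{E}\sup_{t\in[0,T]}\Bigl|p\int_0^t \|u_N\|^{p-2}\langle u_N,P_N g(u_N)\,dW^Q\rangle\Bigr|
\le C\,\mathbf{E}\Bigl(\sup_{t\le T}\|u_N(t)\|^p \cdot \int_0^T \|u_N\|^p\, ds\Bigr)^{1/2}.
\end{align*}
A weighted Young inequality absorbs $\tfrac{1}{2}\mathbf{E}\sup_t\|u_N\|^p$ into the left-hand side, leaving
$\mathbf{E}\sup_{t\in[0,T]}\|u_N(t)\|^p \le C + C\int_0^T \mathbf{E}\|u_N(s)\|^p\, ds \le C$
by the pointwise bound already obtained. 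The main technical obstacle is purely bookkeeping: ensuring every use of $P_N$ (in the drift, diffusion, and covariance terms) is absorbed by the contraction property $\|P_N\|_{L^2\to L^2}\le 1$ so that all constants are genuinely independent of $N$; no regularity beyond $L^p(\Omega;H)$ of $u_0$ is actually needed for this step.
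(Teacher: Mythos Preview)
Your argument is essentially the paper's: apply It\^o's formula to $\|u_N\|^p$, use the dissipativity of $A_N$ and the coercivity of $f$ on the drift, the linear growth of $g$ on the It\^o correction terms, and then control the martingale via Burkholder--Davis--Gundy combined with Young's inequality and Gronwall. The only structural difference is that the paper localises with the stopping time $\tau_R^N=\min\{\inf\{t:\|u_N(t)\|>R\},T\}$ and works directly on $\mathbf{E}\sup_{t\le\tau_R^N}\|u_N(t)\|^p$ before letting $R\to\infty$ via monotone convergence, whereas you proceed in two steps (first $\sup_t\mathbf{E}\|u_N(t)\|^p$, then $\mathbf{E}\sup_t\|u_N(t)\|^p$).

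The one point to tighten is your sentence ``taking expectation kills the stochastic integral'': this presumes the local martingale $\int_0^t \|u_N\|^{p-2}\langle u_N,P_Ng(u_N)\,dW^Q\rangle$ is a true martingale, which is precisely what you do not yet know before the estimate is established. The paper's stopping-time device handles exactly this circularity; inserting the same localisation into your argument (or, equivalently, running your Gronwall on $[0,\tau_R^N]$ first) closes the gap without changing anything else.
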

\begin{proof} 
This can be done by following the arguments in \cite{LiuR10} as follows.

By \eqref{eq:semidis}, Ito's formula and Assumption \ref{assump:2}, we have
\begin{align}
\|u_N(t)\|^p&=\|P_Nu_0\|^p+p(p-2)\int_0^t \|u_N(s)\|^{q-4}\|(P_Ng(u_N(s)P_N))^\ast u_N(s)\|^2ds\notag\\
&\quad+p\int_0^t \|u_N(s)\|^{p-2}\langle A_Nu_N(s)+P_Nf(u_N(s)), u_N(s)\rangle +\frac{1}{2}\|P_Ng(u_N(s))P_N\|_{L_2(H)}^2ds\notag\\
&\quad+p\int_0^t \|u_N(s)\|^{p-2} \langle u_N(s), P_Ng(u_N(s))dW^Q(s)\rangle\notag\\
&\leq \|P_Nu_0\|^p+C\int_0^t \|u_N(s)\|^pds-p\int_0^t \|u_N(s)\|^{p-2} \|\nabla u_N(s)\|^2ds-\theta\int_0^t \|u_N(s)\|^{p-2+\alpha}ds\notag\\
&\quad+q\int_0^t \|u_N(s)\|^{p-2} \langle u_N(s), P_Ng(u_N(s))dW^Q(s)\rangle. 
\end{align}

For any given $N$, we define the stopping time
$$
\tau_R^N=\min\{\inf\{t\in [0,T]: \|u_N(t)\|>R\}, T\}, \; R>0. 
$$
It is obvious that 
$$
\lim\limits_{R\to\infty} \tau_R^N=T, \;\; \mathbf{P}\text{-}a.s.$$

Then by the Burkholder-Davis-Gundy inequality and the Young's inequality, we have
\begin{align}
&\mathbf{E}\sup\limits_{r\in [0, t]} \bigg|\int_0^r\|u_N(s)\|^{p-2}\langle u_N(s), P_Ng(u_N(s))dW^Q(s)\bigg|\notag\\
&\quad\leq 3\mathbf{E}\bigg(\int_0^t \|u_N(s)\|^{2p-2}\|g(u_N(s))\|_{L_2^0}^2ds\bigg)^{1/2}\notag\\
&\quad\leq 3\mathbf{E}\bigg(\epsilon\sup\limits_{s\in [0,t]}\|u_N(s)\|^{2p-2}\cdot C_\epsilon\int_0^t \|(u_N(s))\|^2ds\bigg)^{1/2}\notag\\
&\quad\leq 3\mathbf{E}\bigg[\epsilon\sup\limits_{s\in [0,t]}\|u_N(s)\|^{p}+C_\epsilon\bigg(\int_0^t \|(u_N(s))\|^2ds\bigg)^{p/2}\bigg]\notag\\
&\quad\leq 3\epsilon\mathbf{E}\sup\limits_{s\in [0,t]}\|u_N(s)\|^{p}+C_\epsilon\int_0^t \mathbf{E}\|u_N(s)\|^pds.
\end{align}
Therefore, the Gronwall's inequality implies
\begin{align*}
\mathbf{E}\sup\limits_{t\in [0, \tau_R^N]} \|u_N(t)\|^p\leq C\mathbf{E}\|u_0\|^p,\;\;n\geq 1. 
\end{align*}
For $R\to\infty,$ the desired result follows from the monotone convergence theorem. 
\end{proof}

\begin{theorem}\label{prop:semi}
Let $u$ and $u_N$ be the solutions of \eqref{model:AC} and \eqref{eq:weakform}. Then, under Assumptions \ref{assump:1}-\ref{assump:4},  there exists a constant $C$ independent of $N$ such that 
\begin{align}
\|u(t)-u_N(t)\|_{L^2(\Omega; H)}\leq CN^{-\gamma}, \ \ t>0.
\end{align} 
\end{theorem}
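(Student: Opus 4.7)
The plan is to insert an auxiliary intermediate process and split the total error into a semi-discretization error with frozen source plus a propagation error. Define $\tilde u_N(t)\in V_N$ by
\begin{equation*}
d\tilde u_N = A_N\tilde u_N\,dt + P_N f(u)\,dt + P_N g(u)\,dW^Q(t),\qquad \tilde u_N(0)=P_Nu_0,
\end{equation*}
so that, writing $\eta=u-\tilde u_N$ and $\varphi=\tilde u_N-u_N$, we have $u-u_N=\eta+\varphi$. Subtracting the mild forms of $u$ and $\tilde u_N$ yields
\begin{equation*}
\eta(t) = -F_N(t)u_0 - \int_0^t F_N(t-s)f(u(s))\,ds - \int_0^t F_N(t-s)g(u(s))\,dW^Q(s).
\end{equation*}
By Lemma 3.1 (with $\mu=\nu=\gamma$ on the deterministic terms and $\mu=\gamma$, $\nu=\gamma-1+\epsilon$ on the stochastic term), the BDG inequality, an interpolated version of Assumption \ref{assump:3} giving $\|g(u)\|_{L_{2,\gamma-1+\epsilon}^0}\le c\|u\|_\gamma$, and the $L^p(\Omega;H^\gamma)$ moment bounds from Assumption \ref{assump:4} and Theorem \ref{prop:H1r}, one obtains $\|\eta(t)\|_{L^p(\Omega;H)}\le C N^{-\gamma}$ for every $p\ge 2$.

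Next, $\varphi\in V_N$ satisfies the closed finite-dimensional SDE
\begin{equation*}
d\varphi = A_N\varphi\,dt + P_N[f(u)-f(u_N)]\,dt + P_N[g(u)-g(u_N)]\,dW^Q,\qquad \varphi(0)=0,
\end{equation*}
so It\^o's formula applied to $\|\varphi\|^2$ is fully rigorous. Using $P_N\varphi=\varphi$, the drift inner product becomes $\langle\varphi,f(u)-f(u_N)\rangle$, which I split as
\begin{equation*}
\langle\varphi,f(u)-f(\tilde u_N)\rangle \;+\; \langle\varphi,f(\tilde u_N)-f(u_N)\rangle.
\end{equation*}
The second bracket is bounded by $L\|\varphi\|^2$ via the one-sided Lipschitz condition in Assumption \ref{assump:2} applied with $(\tilde u_N,u_N)$, whose difference is precisely $\varphi$. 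The first is controlled by Young's inequality together with Lemma \ref{lem:f}: $\|f(u)-f(\tilde u_N)\|\le C(1+\|u\|_\gamma^{P-1}+\|\tilde u_N\|_\gamma^{P-1})\|\eta\|$. The dissipative piece $\langle\varphi,A_N\varphi\rangle\le 0$ is dropped, and the diffusion quadratic variation is bounded by $C(\|\eta\|^2+\|\varphi\|^2)$ using Assumption \ref{assump:3}.

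Taking expectation kills the stochastic integral. Applying Cauchy--Schwarz to the crossing term gives
\begin{equation*}
\mathbf{E}\|f(u)-f(\tilde u_N)\|^2 \le \bigl(\mathbf{E}(1+\|u\|_\gamma^{P-1}+\|\tilde u_N\|_\gamma^{P-1})^4\bigr)^{1/2}\bigl(\mathbf{E}\|\eta\|^4\bigr)^{1/2}\le CN^{-2\gamma},
\end{equation*}
where the $N$-independent moment bound $\mathbf{E}\sup_t\|\tilde u_N\|_\gamma^{4(P-1)}<\infty$ is obtained by repeating the proof of Theorem \ref{prop:H1r} on the mild form of $\tilde u_N$ using $\|(-A_N)^\mu E_N(t)\|\le Ct^{-\mu}$ and the stability of $P_N$ in the relevant fractional norms. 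This yields
\begin{equation*}
\mathbf{E}\|\varphi(t)\|^2 \le CN^{-2\gamma} + C\int_0^t \mathbf{E}\|\varphi(s)\|^2\,ds,
\end{equation*}
and Gronwall's lemma gives $\mathbf{E}\|\varphi(t)\|^2\le CN^{-2\gamma}$. Combined with Step 1 via the triangle inequality, this produces the claim.

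\paragraph{Main obstacle.} The chief difficulty is circumventing the superlinear, only locally Lipschitz nonlinearity $f$. A direct Gronwall on $\mathbf{E}\|u-u_N\|^2$ using Lemma \ref{lem:f} introduces the product $(1+\|u\|_\gamma^{P-1}+\|u_N\|_\gamma^{P-1})\|u-u_N\|$, which cannot be closed in $L^2$ without invoking higher moments of $\|u-u_N\|$ that are not a priori small. The decomposition $u-u_N=\eta+\varphi$ routed through the auxiliary $\tilde u_N$ confines the use of the polynomial-growth estimate to the already-small $\eta$, while the comparison of $\tilde u_N$ with $u_N$ exploits purely the monotone (one-sided Lipschitz) structure of $f$, which yields only a benign absolute constant. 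A secondary technical point is verifying the $N$-uniform $H^\gamma$-moment bound for $\tilde u_N$, which is where the spectral analogue of Theorem \ref{prop:H1r} must be carried out.
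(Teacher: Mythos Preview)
Your approach is genuinely different from the paper's. The paper does not introduce an auxiliary process: it works directly with the mild-solution difference $e^N=u-u_N$, pairs the expansion of $e^N(t)$ against itself, and splits into $I_1,I_2,I_3$; the crucial nonlinear contribution is $I_{22}=\int_0^t(E_N(t-s)P_N(f(u(s))-f(u_N(s))),e^N(t))\,ds$, for which the paper simply invokes the one-sided Lipschitz condition together with Theorem~\ref{prop:H1r} and Lemma~\ref{lem:uNgam} to obtain $\mathbf{E}I_{22}\le L\int_0^t\mathbf{E}\|e^N(s)\|^2\,ds$. Your decomposition $u-u_N=\eta+\varphi$ and the It\^o argument on $\|\varphi\|^2$ in the finite-dimensional space $V_N$ is cleaner precisely at this point: because $\varphi\in V_N$, the drift pairing becomes $\langle\varphi,f(\tilde u_N)-f(u_N)\rangle$ with no semigroup or time mismatch, and the one-sided Lipschitz applies verbatim. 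The paper's treatment of $I_{22}$, by contrast, is opaque, since the contraction $E_N(t-s)P_N$ and the pairing with $e^N(t)$ rather than $e^N(s)$ prevent a direct use of \eqref{fcoer}. What the paper's route buys is that it never needs $H^\gamma$-control of any discrete object---only the $H$-bound of Lemma~\ref{lem:uNgam}---whereas your argument requires $N$-uniform moments of $\|\tilde u_N\|_\gamma$.

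That last requirement is the one place you should tighten. Your sketch ``repeat Theorem~\ref{prop:H1r} with $A_N$ and use stability of $P_N$ in fractional norms'' asks for $\|(-A_N)^{\gamma/2}P_Nv\|\le C\|v\|_\gamma$ and a comparison of $\|(-A)^{\gamma/2}\cdot\|$ with $\|(-A_N)^{\gamma/2}\cdot\|$ on $V_N$; for $\gamma\le 2$ these follow from $A_NP_N=P_NA$ on $\mathrm{dom}(A)$ and interpolation, but for $\gamma>2$ (which the paper explicitly allows) neither is established here and neither is routine for a polynomial Galerkin projection. A more economical route: Lemma~\ref{lem:f} only uses $H^\gamma\hookrightarrow L^\infty$, so it suffices to show $\sup_N\mathbf{E}\|\tilde u_N\|_{L^\infty}^{4(P-1)}<\infty$. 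Since the sources in $\tilde u_N$'s mild form are $f(u),g(u)$ with known $H^\gamma$-moments from Theorem~\ref{prop:H1r}, a bound on $\|(-A_N)^{s/2}\tilde u_N\|$ for some fixed $s\in(d/2,2]$, combined with a discrete Sobolev embedding on $V_N$, would close the argument without touching large fractional powers.
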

\begin{proof}Let $e^N(t)=u(t)-u_N(t)$. 
Subtracting \eqref{eq:uN} from \eqref{eq:truesol} and multiplying  the result  by $e_N(t)$ gives
\begin{align*}
\|e^N(t)\|^2&= (F_N(t)u_0, e_N(t))+\int_0^t (E_N(t-s)P_Nf(u_N(s))-E(t-s)f(u(s)), e_N(t))ds\notag\\
&\quad+\bigg(\int_0^t E_N(t-s)P_N[g(u_N(s))dW^Q(s)-E(t-s)g(u(s))dW^Q(s)], e_N(t)\bigg)\notag\\
&\quad:=I_1+I_2+I_3.
\end{align*}

The first term can be estimated by Lemma \ref{lem:1} with $\mu=\nu=\gamma$:
\begin{align}\label{eq:semiI1}
\mathbf{E}I_1\leq\mathbf{E}\|F_N(t)u_0\|^2+\frac{1}{4}\mathbf{E}\|e_N(t)\|^2\leq CN^{-2\gamma}\|u_0\|_{\gamma}^2+\frac{1}{4}\mathbf{E}\|e_N(t)\|^2.
\end{align}
The second one can be separated by two terms as follows
\begin{align*}
&\int_0^t (E_N(t-s)P_Nf(u_N(s))-E(t-s)f(u(s)), e_N(t))ds\notag\\
&\quad\leq\int_0^t (F_N(t-s)f(u(s)), e_N(t))ds
 +\int_0^t \bigg(E_N(t-s)P_N(f(u(s))-f(u_N(s))), e_N(t)\bigg)ds\notag\\
&\quad =:I_{21}+I_{22}.
\end{align*}
{An application of Young's inequality, together with Theorem \ref{prop:H1r}, 
and Lemma \ref{lem:1} (with $\mu=\nu=\gamma$) gives
\begin{align*}
\mathbf{E}I_{21}\leq CN^{-2\gamma}\mathbf{E}\sup\limits_t\|f(u(t))\|_{\gamma}^2+\frac{1}{4}\mathbf{E}\|e_N(t)\|^2. 
\end{align*}
}
In order to bound $I_{22}$, we apply  the one-sided Lipschitz condition for $f$, thanks to  Theorem \ref{prop:H1r} and Lemma \ref{lem:uNgam} , we   obtain
\begin{align*}
\mathbf{E}I_{22}\leq L\int_0^t  \mathbf{E}\|e_N(s)\|^2ds.
\end{align*}
Therefore, a combination of estimations of $I_{21}$ and $I_{22}$ yields
\begin{align}\label{eq:I2}
\mathbf{E}I_2\leq L\int_0^t  \mathbf{E}\|e_N(s)\|^2ds+CN^{-2\gamma}+\frac{1}{4}\mathbf{E}\|e_N(t)\|^2.
\end{align}

Similarly, the Young's inequality implies
\begin{align*}
\mathbf{E}I_3&
\leq \mathbf{E}\bigg\|\int_0^t E_N(t-s)P_N[g(u_N(s))dW^Q(s)-E(t-s)g(u(s))dW^Q(s)]\bigg\|^2+\frac{1}{4}\mathbf{E}\|e_N(t)\|^2\notag\\
&\leq \frac{1}{4}\mathbf{E}\|e_N(t)\|^2+2\mathbf{E}\bigg\|\int_0^t F_N(t-s)g(u(s))dW^Q(s)\bigg\|^2\notag\\
&\quad+2\mathbf{E}\bigg\|\int_0^t E_N(t-s)P_N(g(u(s))-g(u_N(s))dW^Q(s)\bigg\|^2\notag\\
&:=\frac{1}{4}\mathbf{E}\|e_N(t)\|^2+I_{31}+I_{32}.
\end{align*}
The Burkholder-Davis-Gundy inequality, Lemma \ref{lem:1},  Assumption \ref{assump:3} imply
\begin{align}
I_{31}&\leq 2C^2(p)\int_0^t \mathbf{E}\|F_N(t-s)g(u(s))\|_{L_2^0}^2ds\notag\\
&\leq CN^{-2\gamma}\int_0^t \mathbf{E}\|g(u)\|_{L_{2,\gamma}}^2ds
\leq CN^{-2\gamma}\int_0^t \mathbf{E}\|u(s)\|^2_{\gamma}ds.
\end{align}
Following the same spirit, we have
\begin{align}
I_{32}&\leq 2C^2(p)\int_0^t \mathbf{E}\|E_N(t-s)P_N(g(u(s))-g(u_N(s))\|^2_{L_2^0}ds\notag\\
&\leq 2C^2(p)\int_0^t \mathbf{E}\|g(u(s))-g(u_N(s)\|^2ds\notag\\
&\leq C\int_0^t \mathbf{E}\|u(s)-u_N(s)\|^2ds.
\end{align}

Hence,
\begin{align}\label{eq:I3}
I_3\leq CN^{-2\gamma}+C\int_0^t\mathbf{E} \|e_N(s)\|^2ds+\frac{1}{4}\mathbf{E}\|e_N(t)\|^2. 
\end{align}

Finally, we combine estimates \eqref{eq:semiI1}-\eqref{eq:I3} and arrive at
\begin{align*}
\mathbf{E}\|e_N(t)\|^2&\leq CN^{-2\gamma}+C\int_0^t \mathbf{E}\|e_N(s)\|^2ds
\end{align*}
Then, the desired result is achieved by the  Gronwall's inequality. 
\end{proof}
\subsection{Efficient implementation with spectral-Galerkin method}
We present below an efficient implementation by using the spectral-Galerkin method  \cite{Shen94} which will greatly simply the implementation and increase the efficiency. 
 To fix the idea, we  take $\mathcal{O}=(0,1)^2$ and $A=\Delta$ as an example. 

Our spectral semi-discretization  \eqref{eq:semidis} is to equivalent to finding $u_N\in V_N$ such that
\begin{align}\label{eq:weakform}
(d u_N, \chi_N)=(A_Nu_N,\chi_N)dt+(f(u_N),\chi_N)dt+(g(u_N)dW^Q(t),\chi_N),\ \ \chi_N\in V_N,
\end{align}
where $W^Q(t)\approx \sum\limits_{j_1,j_2=1}^J \sqrt{q_{j_1j_2}}e_{j_1j_2}(x,y)\beta_{j_1j_2}(t)$. 

Let $\{\phi_{m}(\cdot)\}_{m=1}^N$ be the basis functions of $V_N$ in 1-D so that 
 $\{\phi_{m}(x)\phi_j(y)\}_{m,j=1}^N$  forms a basic for $V_N$ in 2-D.
\begin{align*} 
&u_N(t)=\sum\limits_{m,n=0}^{N-2} c_{mn}(t)\phi_{m}(x)\phi_n(y),\;   \mathbf{C}(t)=(c_{mn}(t))_{m,n=0,1,\cdots, N-2}; \notag\\
&a_{mn}=\int_0^1\phi'_m(x)\phi'_n(x)dx,\; \mathbf{A}=(a_{mn})_{m,n=0,1,\cdots, N-2};\notag\\
&b_{mn}=\int_0^1\phi_m(x)\phi_n(x)dx,\; \mathbf{B}=(b_{mn})_{m,n=0,1,\cdots, N-2};\notag\\
&f_{mn}=\int_{\mathcal{O}}f(u_N(t))\phi_m(x)\phi_n(y)dxdy,\; \mathbf{F}(t)=(f_{mn})_{m,n=0,1,\cdots, N-2};\notag\\
&g^{j_1j_2}_{mn}=\int_{\mathcal{O}}g(u_N(t))e_{j_1j_2}(x,y)\phi_m(x)\phi_n(y)dxdy,\; \mathbf{G}_{j_1j_2}(t)=(g^{j_1j_2}_{m,n})_{m,n=0,1,\cdots,N-2}.
\end{align*}
Then, \eqref{eq:weakform} can be transformed into
\begin{align}\label{eq:C}
\mathbf{B}(d{\mathbf{C}}(t))\mathbf{B}=-[\mathbf{A}\mathbf{C}(t)\mathbf{B}+\mathbf{B}\mathbf{C}(t)\mathbf{A}]dt+\mathbf{F}(t)dt+\sum\limits_{j_1,j_2=1}^J \sqrt{q_{j_1j_2}}\mathbf{G}_{j_1j_2}(t)d\beta_{j_1j_2}(t).
\end{align}
We now perform a matrix diagonalization technique (cf. \cite[Chap 8]{ShenTW11}) to the above system. Let $(\lambda_i,\bar h_i)$ $(i=0,1,\cdots,N-2)$ be the generalized eigenpairs such that $\mathbf{B}\bar h_i=\lambda_i\mathbf{A}\bar h_i $, and set 
\begin{equation}\label{eigendecomp}
 \mathbf{\Lambda}=\text{diag}(\lambda_0,\lambda_1,\cdots,\lambda_{N-2}),\;
 \mathbf{H}=(\bar h_0,\bar h_1,\cdots,\bar h_{N-2}).
\end{equation}
 Then, we have $\mathbf{B}\mathbf{H}=\mathbf{A}\mathbf{H\Lambda}$. Note that since $\mathbf{A}$ and $\mathbf{B}$ are symmetric, we have $\mathbf{H}^{-1}=\mathbf{H}^T$.
 
Writing $\mathbf{C}(t)=\mathbf{H}\mathbf{V}(t)\mathbf{H}^T$ in \eqref{eq:C}, we arrive at
\begin{align*}
\mathbf{H}\mathbf{\Lambda} d\mathbf{V}(t) \mathbf{\Lambda}\mathbf{H}^T=-[\mathbf{HV}(t)\mathbf{\Lambda H}^T+\mathbf{H\Lambda V}(t)\mathbf{H}^T]dt+\mathbf{A}^{-1}(\mathbf{F}(t)dt+\sum\limits_{j_1,j_2=1}^J \sqrt{q_{j_1j_2}}\mathbf{G}_{j_1j_2}(t)d\beta_{j_1j_2}(t))\mathbf{A}^{-1}.
\end{align*}
Multiplying the left (resp. right) of the above equation by $\mathbf{H}^T$ (resp. $\mathbf{H}$), we arrive at
\begin{align*}
\mathbf{\Lambda} d\mathbf{V}(t) \mathbf{\Lambda}=-[\mathbf{V}(t)\mathbf{\Lambda }+\mathbf{\Lambda V}(t)]dt+\mathbf{H}^T\mathbf{A}^{-1}(\mathbf{F}(t)dt+\sum\limits_{j_1,j_2=1}^J \sqrt{q_{j_1j_2}}\mathbf{G}_{j_1j_2}(t)d\beta_{j_1j_2}(t))\mathbf{A}^{-1}\mathbf{H},
\end{align*}
which can be rewritten componentwise as a system of nonlinear SDEs with decoupled linear parts:
\begin{align}
\lambda_m\lambda_n d{V}_{mn}(t)&=-[\lambda_m+\lambda_n]{V}_{mn}(t) dt+(\mathbf{H}^T\mathbf{A}^{-1}\mathbf{F}(t)\mathbf{A}^{-1}\mathbf{H})_{mn}dt\notag\\
&\quad+\sum\limits_{j_1,j_2=1}^J \sqrt{q_{j_1j_2}}(\mathbf{H}^T\mathbf{A}^{-1}\mathbf{G}_{j_1j_2}(t)\mathbf{A}^{-1}\mathbf{H})_{mn}d\beta_{j_1j_2}(t),\;\;
 0\leq m,n\leq N-2. \label{decoup}
\end{align}

Several remarks are in order:
\begin{itemize}
\item 
In principle, one can solve the above system of nonlinear SDEs using any standard SDE solver. We shall construct a special tamed semi-implicit scheme in the next section which is unconditionally stable as well as extremely easy to implement.
\item The above procedure is also applicable to a separable operator $A$ in the form $Au=\partial_x (a(x) \partial_x u)+\partial_y (b(y) \partial_y u)$, and can be extended in a straightforward fashion to three dimensions.
\item In the special case of $A=\Delta$ considered above, we can use
\begin{align}\label{eq:phim}
 \phi_{m}(x)=\frac{1}{2\sqrt{4m+6}}(L_m(x)-L_{m+2}(x)),\ \ m\geq 0,\end{align}
 where  $L_m(x)$ is the shifted Legendre polynomials on $[0,1]$ such that
 $\phi_m(0)=\phi_m(1)=0$ and $(\phi'_m,\phi'_n)=\delta_{mn}$ \cite{Shen94}. 
Hence, $\mathbf{A}$ is the identity matrix, and the entries of $\mathbf{B}$ has the explicit form \cite{Shen94}
\begin{align}\label{matrixB}
b_{mn}=b_{nm}=
\begin{cases}
\displaystyle{\frac{1}{4(4m+6)}\big(\frac{1}{2m+1}+\frac{1}{2m+5}\big),\ \ m=n}\\
\displaystyle{-\frac{1}{4\sqrt{(4m+6)(4m-2)}}\frac{1}{2m+1},\  \;  m=n+2}\\
\displaystyle{0}, \quad\qquad\qquad\qquad\qquad\qquad \;\;\;\;\;\quad\text{otherwise}.
\end{cases}
\end{align}
Therefore, the eigenpairs of $\mathbf{B}$ can be efficiently and accurately computed.
\end{itemize}

\section{Full discretization and its error analysis}
In this section, we present our  full discretized scheme, establish its stability and carry out    its convergence analysis. 
\subsection{A tamed semi-implicit scheme}
Let $\tau$ be the time step size and $M=T/\tau$. We start with a first-order semi-discrete tamed time discretization scheme for \eqref{model:AC}:
\begin{align}\label{eq:umc}
 &u^{k+1}-u^{k}=\tau\Delta u^{k+1}+\frac{\tau f(u^k)}{1+\tau\|f(u^k)\|^2}+  g(u^k)\Delta W^Q(t_k),
\;0\leq k\leq M-1. 
\end{align}
Combining with \eqref{eq:weakform}, we have its fully discretized version:
\begin{align}\label{eq:um}
 &(u_N^{k+1}-u_N^{k}, \psi)=\tau(\Delta u_N^{k+1}, \psi)+\frac{\tau (f(u_N^k), \psi)}{1+\tau\|f(u_N^k)\|^2}+ (g(u_N^k)\Delta W^Q(t_k),\psi),\ \ \psi\in V_N,\notag\\
 \intertext{or}
 &u_N^{k+1}-u_N^k=\tau A_Nu_N^{k+1}+\frac{\tau P_Nf(u_N^k)}{1+\tau\|f(u_N^k)\|^2}+ P_N[g(u_N^k)\Delta W^Q(t_k)],
\;0\leq k\leq M-1. 
\end{align}
A remarkable property of the semi-discrete  tamed  schemes is that, despite treating the nonlinear term and noise term explicitly, there are still unconditionally stable as we show below.
\begin{theorem}\label{thm:uncond}
The schemes  \eqref{eq:umc} and  \eqref{eq:um} admit a unique solution $u^{k+1}$ and $u_N^{k+1}$, and are unconditionally stable in the sense that for $1\le k\le M-1$, we have
\begin{align*}
\mathbf{E}\| u^{k+1}\|^2+2\tau\sum\limits_{j=0}^k \mathbf{E}\|\nabla u^{j+1}\|^2\leq \exp((2K+2c^2Tr(Q))T)\mathbf{E}\|u_0\|^2+\frac{ \exp((2K+2c^2Tr(Q))T)}{K+c^2Tr(Q)},
\end{align*}
and
\begin{align*}
\mathbf{E}\| u_N^{k+1}\|^2+2\tau\sum\limits_{j=0}^k \mathbf{E}\|\nabla u_N^{j+1}\|^2\leq \exp((2K+2c^2Tr(Q))T)\mathbf{E}\|u_N^0\|^2+\frac{ \exp((2K+2c^2Tr(Q))T)}{K+c^2Tr(Q)},
\end{align*}
where $c$ and $K$ are constants from our Assumptions \ref{assump:1}-\ref{assump:4}.
\end{theorem}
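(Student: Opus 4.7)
Plan. Existence and uniqueness of $u^{k+1}$ (resp. $u_N^{k+1}$) is immediate: each scheme amounts to solving $(I-\tau\Delta)u^{k+1}=\Phi_k$ with a known, $\mathcal{F}_{t_{k+1}}$-measurable right-hand side $\Phi_k\in L^2(\mathcal{O})$, a coercive linear elliptic problem covered by Lax--Milgram (the fully discrete version is the analogous finite-dimensional linear system posed on $V_N$). I therefore focus on the stability bound and describe only the argument for \eqref{eq:umc}; the proof for \eqref{eq:um} is identical after replacing $\Delta$ by $A_N$ and restricting test functions to $V_N$.

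The central step is to test the scheme against $u^{k+1}$ and combine the polarization identity $2(u^{k+1}-u^k,u^{k+1})=\|u^{k+1}\|^2-\|u^k\|^2+\|u^{k+1}-u^k\|^2$ with $(\Delta u^{k+1},u^{k+1})=-\|\nabla u^{k+1}\|^2$, yielding the energy identity
\begin{align*}
\|u^{k+1}\|^2 - \|u^k\|^2 + \|u^{k+1}-u^k\|^2 + 2\tau\|\nabla u^{k+1}\|^2
 = \frac{2\tau(f(u^k),u^{k+1})}{1+\tau\|f(u^k)\|^2} + 2\bigl(g(u^k)\Delta W^Q(t_k),u^{k+1}\bigr).
\end{align*}
I then split $u^{k+1}=u^k+(u^{k+1}-u^k)$ in both right-hand terms. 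For the tamed nonlinearity the $u^k$ part is controlled using the one-sided coercivity $\langle f(u^k),u^k\rangle\leq K\|u^k\|^2$ from Assumption \ref{assump:2} (dropping the non-positive $-\theta\|u^k\|^\alpha$) together with $1+\tau\|f(u^k)\|^2\geq 1$, giving $\leq 2\tau K\|u^k\|^2$; the increment part is controlled by the crucial taming estimate $\tau\|f(u^k)\|/(1+\tau\|f(u^k)\|^2)\leq \sqrt{\tau}/2$, which follows from $(1-\sqrt\tau\|f(u^k)\|)^2\geq 0$, and then Young's inequality produces an $O(\tau)$ contribution plus a fraction of $\|u^{k+1}-u^k\|^2$ that can be absorbed by the LHS gain. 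For the stochastic term, the $u^k$ piece has zero expectation by the $\mathcal{F}_{t_k}$-measurability of $g(u^k),u^k$ and the independence of $\Delta W^Q(t_k)$ from $\mathcal{F}_{t_k}$; the increment piece, after Young's inequality and further absorption of $\|u^{k+1}-u^k\|^2$, is bounded in expectation via the conditional It\^o isometry $\mathbf{E}[\|g(u^k)\Delta W^Q(t_k)\|^2\mid\mathcal{F}_{t_k}]=\tau\|g(u^k)\|_{L_2^0}^2$ and Assumption \ref{assump:3} to yield a bound of the form $c^2\mathrm{Tr}(Q)\,\tau\,\mathbf{E}\|u^k\|^2$.

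Taking expectations and collecting these estimates produces a one-step recurrence
\begin{align*}
\mathbf{E}\|u^{k+1}\|^2 + 2\tau\,\mathbf{E}\|\nabla u^{k+1}\|^2 \leq (1+B\tau)\,\mathbf{E}\|u^k\|^2 + D\tau,\qquad B=2K+2c^2\mathrm{Tr}(Q),
\end{align*}
with an absolute constant $D$; iterating and using the standard discrete Gronwall inequality $a_{k+1}\leq e^{BT}(a_0+D/B)$ then yields the stated bound (a careful tracking of the absorption constants in the two Young inequalities is what produces the final additive factor $1/(K+c^2\mathrm{Tr}(Q))$). The main obstacle is precisely the fully explicit treatment of the nonlinearity and the noise, which for the bare semi-implicit Euler scheme would destroy unconditional stability; what rescues the argument is the taming factor $1/(1+\tau\|f(u^k)\|^2)$, which forces $\tau\|f(u^k)\|\lesssim\sqrt{\tau}$---exactly the scale that can be absorbed by the numerical dissipation $\|u^{k+1}-u^k\|^2$ generated by the polarization identity.
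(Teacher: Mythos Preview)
Your proposal is correct and follows essentially the same route as the paper: test the scheme against $u^{k+1}$, invoke the polarization identity, split $u^{k+1}=u^k+(u^{k+1}-u^k)$ in both forcing terms, use the coercivity $\langle f(u^k),u^k\rangle\le K\|u^k\|^2$ and the martingale property for the $u^k$ parts, absorb the increment parts via the taming bound and Young's inequality into the numerical dissipation $\|u^{k+1}-u^k\|^2$, and close by a discrete Gronwall iteration. The only cosmetic difference is that the paper handles the tamed nonlinear increment by Young's inequality with weights $\tau$ and $\tfrac{1}{4\tau}$ together with $\tfrac{\tau^2\|f\|^2}{1+\tau\|f\|^2}\le\tau$, whereas you use the equivalent (in fact slightly sharper) scalar bound $\tfrac{\tau\|f\|}{1+\tau\|f\|^2}\le\tfrac{\sqrt\tau}{2}$; both lead to the same one-step recurrence and the stated constants.
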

\begin{proof}

The proof for the semi-discrete and full-discrete cases are essentially the same so we shall only prove the result for the full-discrete case.

It is clear that the scheme  \eqref{eq:um}  admits a unique solution. 

Choosing $\psi=u_N^{k+1}$ in \eqref{eq:um} and using the identity
$$
(a-b,2a)=|a|^2-|b|^2+|a-b|^2,
$$
we obtain
\begin{align*}
&\frac{1}{2}[\| u_N^{k+1}\|^2-\| u_N^k\|^2+\|(u_N^{k+1}-u_N^k)\|^2]+\tau\|\nabla u_N^{k+1}\|^2\notag\\
&=\frac{\tau}{1+\tau\|f(u_N^k)\|^2}(f(u_N^k), u_N^{k+1})+(g(u_N^k)\Delta W^Q(t_k),u_N^{k+1}).
\end{align*}
Then,  the Young's inequality, Assumption \ref{assump:2} and \ref{assump:3},  and {$\mathbf{E}[(g(u_N^k)\Delta W^Q(t_k), u_N^k)\big| \mathcal{F}_{t_k}]=0$}  imply
\begin{align*}
&\frac{1}{2}\mathbf{E}[\|u_N^{k+1}\|^2-\| u_N^k\|^2+\|(u_N^{k+1}-u_N^k)\|^2]+\tau\mathbf{E}\|\nabla u_N^{k+1}\|^2\notag\\
&\quad\leq \mathbf{E}\bigg[\frac{\tau}{1+\tau\|f(u_N^k)\|^2}(f(u_N^k), u_N^{k+1}-u_N^k)\bigg)\bigg]+\tau\mathbf{E} (f(u_N^k), u_N^k)
+\mathbf{E}(g(u_N^k)\Delta W^Q(t_k),u_N^{k+1}-u_N^k)\notag\\
&\quad\leq \mathbf{E}\bigg[\frac{\tau}{1+\tau\|f(u_N^k)\|^2}\bigg(\tau \|f(u_N^k)\|^2+\frac{1}{4\tau}\|u_N^{k+1}-u_N^k\|^2\bigg)\bigg]+K\tau\mathbf{E}\|u_N^k\|^2\notag\\
&\quad\quad +\mathbf{E}\|g(u_N^k)\Delta W^Q(t_k)\|^2+\frac{\mathbf{E}\|u_N^{k+1}-u_N^k\|^2}{4}\notag\\
&\quad\leq \tau+\frac{1}{2}\mathbf{E}\|u_N^{k+1}-u_N^k\|^2+K\tau\mathbf{E}\|u_N^k\|^2+c^2\tau Tr(Q)\mathbf{E}\|u_N^k\|^2, 
\end{align*}
{ where we have used $\Delta W^Q(t_k)\sim N(0, Q\tau)$ in the last line.}

So, we have
\begin{align*}
\mathbf{E}\| u_N^{k+1}\|^2-\big(1+(2K+2c^2Tr(Q))\tau\big)\mathbf{E}\|u_N^k\|^2+2\tau\mathbf{E}\|\nabla u_N^{k+1}\|^2\leq 2\tau.
\end{align*}
Denote 
$$A_0=\big(1+(2K+2c^2Tr(Q))\tau\big),\ \ \text{and}\ \  k+1=M\gamma_0, \ 0\leq \gamma_0\leq 1.$$
We have 
\begin{align*}
&\ \mathbf{E}\| u_N^{k+1}\|^2-A_0\mathbf{E}\|u_N^k\|^2+2\tau\mathbf{E}\|\nabla u_N^{k+1}\|^2\leq 2\tau,\notag\\
&A_0\mathbf{E}\| u_N^{k}\|^2-A_0^2\mathbf{E}\|u_N^{k-1}\|^2+2A_0\tau\mathbf{E}\|\nabla u_N^{k}\|^2\leq 2A_0\tau,\notag\\
&\cdots\notag\\
&A_0^k\mathbf{E}\| u_N^{1}\|^2-A_0^{k+1}\mathbf{E}\|u_N^0\|^2+2A_0^{k}\tau\mathbf{E}\|\nabla u_N^{1}\|^2\leq 2A_0^k\tau.
\end{align*}
Summing up the above inequalities yields
\begin{align*}
\mathbf{E}\| u_N^{k+1}\|^2-A_0^{k+1}\mathbf{E}\|u_N^0\|^2+2\tau\sum\limits_{j=0}^k A_0^{k-j}\mathbf{E}\|\nabla u_N^{j+1}\|^2\leq \frac{2\tau(1-A_0^{k+1})}{1-A_0}.
\end{align*}
 Moreover,  a simple computation shows that
$$
\lim\limits_{M\to\infty}A_0^{k+1}=\lim\limits_{M\to\infty} \big(1+(2K+2c^2Tr(Q))\tau\big)^{k+1}=\exp((2K+2c^2Tr(Q))T\gamma_0).
$$
Therefore, we obtain
\begin{align*}
\mathbf{E}\| u_N^{k+1}\|^2+2\tau\sum\limits_{j=0}^k A_0^{k-j}\mathbf{E}\|\nabla u_N^{j+1}\|^2\leq \exp((2K+2c^2Tr(Q))T)\mathbf{E}\|u_N^0\|^2+\frac{ \exp((2K+2c^2Tr(Q))T)}{K+c^2Tr(Q)}.
\end{align*}
The desired result follows since $A_0>1$. 
\end{proof}
\subsection{Convergence analysis}

Now, we  carry out a convergence analysis for \eqref{eq:um}. We denote
$$
E^{n}=(I-\tau A_N)^{-n}, 1\leq n\leq M,
$$ 
which has the following approximation properties. 
\begin{lemma} Under Assumption \ref{assump:1}, we have
\begin{align}\label{eq:estimate_En}
&\|(-A)^{\frac{\rho}{2}}(E(t_n)-E^n)v\|\leq C\tau^{\frac{\beta}{2}} t_n^{-\frac{\beta-\gamma+\rho}{2}}\|(-A)^{\frac{\gamma}{2}}v\|, \ 0\leq\gamma\leq \beta+\rho, \gamma\geq 0, \beta\in [0,2];\notag\\
& \|(E(t)-E^nP_N)v\|\leq C(N^{-\mu}+\tau^{\min\{\frac{\mu}{2},1\}}) \|v\|_\mu,\ \ v\in H^\mu. 
\end{align}
\end{lemma}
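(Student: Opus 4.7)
The plan is to establish both bounds by reducing them, via the spectral theorem, to scalar inequalities for the backward-Euler rational approximant $r(z)=(1-z)^{-1}$ of $e^{z}$. Under Assumption~\ref{assump:1} the operator $A_N$ is self-adjoint negative definite on the finite-dimensional space $V_N$, hence admits an orthonormal eigenbasis $\{\phi_k^N\}$ with eigenvalues $\lambda_k^N>0$; the operators $E_N(t)=e^{tA_N}$ and $E^n=(I-\tau A_N)^{-n}$ are then simultaneously diagonalised, and for admissible $\sigma$ the norm $\|(-A)^{\sigma/2}v\|$ is equivalent to the spectral norm $(\sum_k(\lambda_k^N)^\sigma|(v,\phi_k^N)|^2)^{1/2}$ on $V_N$. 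The first inequality thereby reduces to the scalar bound
\begin{equation*}
\lambda^{(\rho-\gamma)/2}\,\bigl|e^{-t_n\lambda}-(1+\tau\lambda)^{-n}\bigr|\le C\,\tau^{\beta/2}\,t_n^{-(\beta-\gamma+\rho)/2},\qquad \lambda>0.
\end{equation*}

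To prove this I would use the classical backward-Euler analysis in the spirit of \cite{Thomee97}. The single-step consistency $|r(-\tau\lambda)-e^{-\tau\lambda}|\le C(\tau\lambda)^2$, telescoped along $n$ uniformly bounded propagators and combined with the Taylor expansion $(1+s/n)^{-n}=e^{-s}(1+O(s^{2}/n))$ valid for $s\le\sqrt{n}$, yields
\begin{equation*}
\bigl|e^{-t_n\lambda}-(1+\tau\lambda)^{-n}\bigr|\le C\,\frac{(t_n\lambda)^{2}}{n}\,e^{-t_n\lambda/2}\quad\text{when }\tau\lambda\le 1,
\end{equation*}
while exponential/polynomial decay of the two separate terms handles the regime $\tau\lambda>1$. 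Multiplying by $\lambda^{(\rho-\gamma)/2}$, substituting $s=t_n\lambda$, and maximising over $s>0$ produces a factor of order $C(\gamma,\rho)/n\le Cn^{-\beta/2}$ for every $\beta\in[0,2]$, which after unwinding the substitution is exactly the claimed $\tau^{\beta/2}t_n^{-(\beta-\gamma+\rho)/2}$. The hypothesis $0\le\gamma\le\beta+\rho$ is precisely what keeps the weight $s^{(\rho-\gamma)/2}$ integrable against $s^{2}e^{-s/2}$ near $s=0$ and at $s\to\infty$.

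For the second estimate I would decompose
\begin{equation*}
E(t_n)-E^n P_N=\bigl(E(t_n)-E_N(t_n)P_N\bigr)+\bigl(E_N(t_n)-E^n\bigr)P_N=F_N(t_n)+\bigl(E_N(t_n)-E^n\bigr)P_N.
\end{equation*}
The first piece is controlled by Lemma~\ref{lem:1} with $\nu=\mu$, giving $\|F_N(t_n)v\|\le CN^{-\mu}\|v\|_\mu$ (the time singularity vanishes because $\mu-\nu=0$). For the second piece I would invoke the first estimate already established, applied to $A_N$ with $\rho=0$ and $\beta=\gamma=\min\{\mu,2\}$, producing $\|(E_N(t_n)-E^n)P_Nv\|\le C\tau^{\min\{\mu/2,1\}}\|(-A_N)^{\min\{\mu,2\}/2}P_Nv\|$; the $H^{\min\{\mu,2\}}$-stability of $P_N$ combined with the discrete/continuous norm equivalence on $V_N$ then closes the argument with $\|v\|_\mu$ on the right.

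The main obstacle is the scalar bound in the second step: a single inequality has to cover the full admissible range of triples $(\beta,\gamma,\rho)$ and deliver the precise singularity $t_n^{-(\beta-\gamma+\rho)/2}$ uniformly in $\lambda>0$, with the constraint $\gamma\le\beta+\rho$ being exactly what prevents the weighted scalar function from blowing up at either endpoint of $s\in(0,\infty)$. A secondary but essential technical issue is the equivalence of the ambient norm $\|(-A)^{\sigma/2}\cdot\|$ with its discrete counterpart $\|(-A_N)^{\sigma/2}\cdot\|$ on $V_N$ for $\sigma$ up to the regularity supported by the Galerkin space, which is what makes the spectral-diagonal bound legitimate on the left-hand sides of both estimates.
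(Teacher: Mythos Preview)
Your approach is sound but takes a different route from the paper. The paper does not prove the first inequality at all---it simply cites \cite{KovacsLL15}---whereas you rebuild it from scalar backward-Euler analysis; your sketch is standard and correct, provided the inequality is read (as the citation intends) as a pure time-discretization estimate with $E(t_n)$ and $E^n$ generated by the \emph{same} self-adjoint operator. For the second inequality the paper uses a different decomposition: it first peels off $\|(E(t_n)-E(t))v\|\le C\tau^{\mu/2}\|v\|_\mu$ via \eqref{eq:ineq2}, and then invokes \cite[Theorem~7.8]{Thomee97} directly for the combined space-time error $\|(E^nP_N-E(t_n))v\|\le C(N^{-\mu}\|v\|_\mu+\tau\|v\|_2)$. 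Your splitting $E(t_n)-E^nP_N=-F_N(t_n)+(E_N(t_n)-E^n)P_N$ instead separates a pure spatial piece (Lemma~\ref{lem:1}) from a pure temporal piece on the discrete level (the first inequality, now applied to $A_N$). This is more modular and avoids citing Thom\'ee's result, but the price is the bound $\|(-A_N)^{\min\{\mu,2\}/2}P_Nv\|\le C\|v\|_\mu$, which for exponents in $(1,2]$ is not a one-liner and requires an inverse-inequality/approximation argument you only flag as a ``technical issue''. Two small points: the lemma has $E(t)$ with $t\in[t_{n-1},t_n]$, not only $t=t_n$, so you still need to add the $\|(E(t_n)-E(t))v\|$ term the paper handles; and note that the paper writes $(-A)$ on the left of the first inequality while $E^n$ is built from $A_N$, so your norm-equivalence step is not cosmetic---it is what makes the statement well-posed in your reading.
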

\begin{proof}
The first inequality can be found in \cite{ KovacsLL15}.  We only need to prove the second one. 
\begin{align*}
 \|(E(t)-E^nP_N)v\|\leq \|(E^nP_N-E(t_n))v\|+\|(E(t_n)-E(t))v\|, \; t\in [t_{n-1},t_n].
\end{align*}
It is clear that
\begin{align*}
\|(E(t_n)-E(t))v\|&=\|(-A)^{-\frac{\mu}{2}}(E(t_n-t))-I)E(t)(-A)^{\frac{\mu}{2}}v\|\\
&\leq C(t_n-t)^{\frac{\mu}{2}}\|x\|_\mu\leq C\tau^{\frac{\mu}{2}}\|v\|_\mu,
\end{align*}
where \eqref{eq:ineq2} is applied and therefore we require $0\leq \mu<2$ for this estimate.  

Furthermore, since $v$ is smooth, we can follow the proof of \cite[Theorem 7.8]{Thomee97} to derive
\begin{align*}
\|(E^nP_N-E(t_n))v\|\leq C(N^{-\mu}\|v\|_\mu+\tau\|v\|_2),\ \; t_n\geq 0.
\end{align*}
 Note that for this estimate, we only require $v\in \dot{H}^\mu$, where $\mu$ can be arbitrarily large. 
\end{proof}

\begin{rem}
 From the proof of \eqref{eq:estimate_En}, one easily infer that the spatial error can be made arbitrarily small provided $v$ is sufficiently smooth whereas the temporal error is at most of order $\mathcal{O}(\tau)$ which can not be improved. 
\end{rem}
We start by establishing  some temporal properties of $u(s)$. 
{\color{black}
\begin{lemma}\label{lem:contu}
Under  Assumptions \ref{assump:1}-\ref{assump:4}, we have
\begin{align}
\|u(t)-u(s)\|_{L^p(\Omega;H)}\leq C(t-s)^{\min\{\frac{\gamma}{2}, \frac{1}{2}\}}, \ \ p\geq 2.
\end{align} 
\end{lemma}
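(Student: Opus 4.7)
The plan is to split $u(t)-u(s)$ via the mild formula \eqref{eq:truesol} into five pieces and bound each in $L^p(\Omega;H)$, exploiting the smoothing of the semigroup together with the regularity established in Theorem \ref{prop:H1r}. Writing (for $0\le s<t\le T$)
\begin{align*}
u(t)-u(s) &= (E(t)-E(s))u_0 \\
&\quad + \int_0^s (E(t-\sigma)-E(s-\sigma))f(u(\sigma))\,d\sigma + \int_s^t E(t-\sigma)f(u(\sigma))\,d\sigma \\
&\quad + \int_0^s (E(t-\sigma)-E(s-\sigma))g(u(\sigma))\,dW^Q(\sigma) + \int_s^t E(t-\sigma)g(u(\sigma))\,dW^Q(\sigma),
\end{align*}
I would treat each of the five terms in turn. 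Since $\gamma>1$, the target exponent $\min\{\gamma/2,1/2\}$ is simply $1/2$, so the job is to extract a uniform Hölder-$1/2$ estimate.

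For the first term I factor $(E(t)-E(s))u_0=(E(t-s)-I)E(s)u_0$ and apply \eqref{eq:ineq2} with $\nu=1/2$, yielding $\|(E(t-s)-I)u_0\|\le C(t-s)^{1/2}\|u_0\|_1$; then Assumption \ref{assump:4} controls $\|u_0\|_1$ (even $\|u_0\|_\gamma$) in $L^p(\Omega)$. For the tail deterministic piece $\int_s^t E(t-\sigma)f(u(\sigma))d\sigma$, the contraction $\|E(\cdot)\|\le 1$ and \eqref{eq:fu} give a trivial $(t-s)$ bound. For the ``difference'' deterministic piece I use
$$
(E(t-\sigma)-E(s-\sigma)) = (E(t-s)-I)E(s-\sigma) = (E(t-s)-I)(-A)^{-1/2}\cdot(-A)^{1/2}E(s-\sigma),
$$
combine \eqref{eq:ineq2} (with $\nu=1/2$) and \eqref{eq:ineq1} (with $\mu=1/2$), and obtain an integrand of size $C(t-s)^{1/2}(s-\sigma)^{-1/2}\|f(u(\sigma))\|$, whose $\sigma$-integral is bounded by $C(t-s)^{1/2}\sup_\sigma\|f(u(\sigma))\|_{L^p(\Omega;H)}$ thanks again to \eqref{eq:fu} and Theorem \ref{prop:H1r}.

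For the two stochastic terms I invoke the Burkholder--Davis--Gundy inequality in the form stated in Lemma 2.1 to reduce the $L^p(\Omega;H)$-estimate to an $L^2$-in-time estimate of $L_2^0$-norms. The tail piece $\int_s^t E(t-\sigma)g(u(\sigma))dW^Q$ is handled with $\|E(t-\sigma)\|_{\mathrm{op}}\le 1$ and Assumption \ref{assump:3} (case $\nu=0$): the BDG bound becomes $\int_s^t (\mathbf{E}\|u(\sigma)\|^p)^{2/p}d\sigma\le C(t-s)$, giving the desired $(t-s)^{1/2}$. For the difference piece I again factor $(E(t-s)-I)E(s-\sigma)$ and this time push the full half-power onto $(E(t-s)-I)$ using $\|(E(t-s)-I)(-A)^{-1/2}\|_{\mathrm{op}}\le C(t-s)^{1/2}$, leaving $(-A)^{1/2}E(s-\sigma)g(u(\sigma))$ to be controlled by $\|g(u(\sigma))\|_{L_{2,1}^0}$, which by Assumption \ref{assump:3} (applied at $\nu=\gamma\ge 1$, interpolated down to $\nu=1$) is bounded by $c\|u(\sigma)\|_\gamma$. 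Here the crucial point is that the bound $(t-s)^{1/2}\|u(\sigma)\|_\gamma$ is uniform in $\sigma$, so the BDG integral gives at most $C(t-s)\cdot T\cdot \sup_\sigma(\mathbf{E}\|u(\sigma)\|_\gamma^p)^{2/p}$, finite by Theorem \ref{prop:H1r}.

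The main obstacle is exactly that difference stochastic term: a naive splitting with $\nu<1/2$ on the semigroup side would force a singular $(s-\sigma)^{-2\nu}$ in the BDG integral that is not integrable at $\nu=1/2$. The trick is to spend the full $1/2$ derivative on $(E(t-s)-I)$ and absorb the remaining half-derivative into $g(u)$ via Assumption \ref{assump:3} at the higher regularity index $\nu=\gamma$, which is precisely what Theorem \ref{prop:H1r} makes available. Assembling the five estimates and applying Minkowski completes the bound $\|u(t)-u(s)\|_{L^p(\Omega;H)}\le C(t-s)^{1/2}$, as claimed.
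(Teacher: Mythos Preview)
Your proof is correct, and the five-term decomposition matches the paper's, but your treatment of two of the pieces is genuinely different and in fact cleaner. For the deterministic difference term the paper exploits the full $H^{\min\{\gamma,2\}}$-regularity of $f(u)$ from Theorem~\ref{prop:H1r} to pull the factor $(t-s)^{\min\{\gamma/2,1\}}$ directly off $(E(t-s)-I)$ with no singularity in $\sigma$; you instead use only $\|f(u)\|\in L^p(\Omega)$ and split the half-power as $(t-s)^{1/2}(s-\sigma)^{-1/2}$, which is still integrable. For the stochastic difference term the paper introduces an $\epsilon$-splitting, writing $(-A)^{(1-\epsilon)/2}E(s-\sigma)$ so that the squared BDG integrand $(s-\sigma)^{\epsilon-1}$ is barely integrable, and then needs $\|g(u)\|_{L^0_{2,\gamma-1+\epsilon}}$; you avoid the $\epsilon$-device entirely by placing the full half-derivative on $g(u)$ and invoking $\|g(u)\|_{L^0_{2,1}}\le C\|g(u)\|_{L^0_{2,\gamma}}\le C\|u\|_\gamma$ (this is just the embedding $H^\gamma\hookrightarrow H^1$, so ``interpolation'' is stronger than what you actually need). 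Your route is more elementary and suffices because the standing hypothesis $\gamma>1$ makes the target exponent exactly $1/2$; the paper's argument would be required to recover the sharper $(t-s)^{\gamma/2}$ rate if $\gamma\le 1$ were allowed.
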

\begin{proof}
Suppose that $0\leq s\leq t\leq T$. Using \eqref{eq:truesol},
\begin{align}
&\|u(t)-u(s)\|_{L^p(\Omega;H)}\notag\\
&\leq \|(E(t)-E(s))u_0\|_{L^p(\Omega;H)}+\bigg\|\int_0^t E(t-\sigma)f(u(\sigma))d\sigma-\int_0^sE(s-\sigma)f(u(\sigma))d\sigma\bigg\|_{L^p(\Omega;H)}\notag\\
&\quad+\bigg\|\int_0^t E(t-\sigma)g(u(\sigma))dW^Q(\sigma)-\int_0^sE(s-\sigma)g(u(\sigma))dW^Q(\sigma)\bigg\|_{L^p(\Omega;H)}\notag\\
&:=H_1+H_2+H_3.
\end{align}
Using \eqref{eq:ineq2},
\begin{align}
H_1&\leq \|E(s)(-A)^{-\min\{\frac{\gamma}{2},1\}}(E(t-s)-I)(-A)^{\min\{\frac{\gamma}{2},1\}}u_0\|_{L^p(\Omega;H)}\notag\\
&\leq C(t-s)^{\min\{\frac{\gamma}{2},1\}}\|u_0\|_{L^p(\Omega;H^{\min\{\gamma,2\}})}.
\end{align}
Similarly,

\begin{align}
H_2&\leq \bigg\|\int_0^s \big(E(t-\sigma)-E(s-\sigma)\big)f(u(\sigma)))d\sigma\bigg\|_{L^p(\Omega;H)}+\bigg\|\int_s^t E(t-\sigma)f(u(\sigma))d\sigma\bigg\|_{L^p(\Omega;H)}\notag\\
&\leq C(t-s)^{{\min\{\frac{\gamma}{2},1\}}} \int_0^s \|f(u(\sigma))\|_{L^p(\Omega;H^{\min\{\gamma,2\}})}d\sigma+\int_s^t \|E(t-\sigma)f(u(\sigma))\|_{L^p(\Omega;H)}ds\notag\\
&\leq C(t-s)^{\min\{\frac{\gamma}{2},1\}}. 
\end{align}

By the Burkholder-Davis-Gundy inequality and Theorem \ref{prop:H1r}
\begin{align}
H_3^2&\leq C\mathbf{E}\bigg\|\int_0^s \big(E(t-\sigma)-E(s-\sigma)\big)g(u(\sigma))dW^Q(\sigma)\bigg\|_{L^{p}(\Omega;H)}^2\notag\\
&\quad+C\mathbf{E}\bigg\|\int_s^tE(t-\sigma)g(u(\sigma))dW^Q(\sigma)\bigg\|_{L^p(\Omega;H)}^2\notag\\
&\leq C\int_0^s (\mathbf{E}\|\big(E(t-\sigma)-E(s-\sigma)\big)g(u(\sigma))\|^p_{L_2^0})^{\frac{2}{p}}d\sigma+C\int_s^t(\mathbf{E}\|E(t-\sigma)g(u(\sigma))\|^p_{L_2^0})^{\frac{2}{p}}d\sigma\notag\\
&\leq C\int_0^s (\mathbf{E}\|(-A)^{\frac{1-\epsilon}{2}} E(s-\sigma) (-A)^{-\min\{\frac{\gamma}{2},1\}}(E(t-s)-I) (-A)^{\min\{\frac{\gamma}{2},1\}-\frac{1-\epsilon}{2}} g(u(\sigma))\|^p_{L_2^0})^{\frac{2}{p}}d\sigma\notag\\
&\quad+C(t-s)\notag\\
&\leq C(t-s)^{\min\{\gamma,2\}}\int_0^s (s-\sigma)^{\epsilon-1} (\mathbf{E}\|g(u)\|^p_{\gamma-1+\epsilon})^{\frac{2}{p}}d\sigma+C(t-s)\notag\\
&\leq C(t-s)^{\min\{\gamma,2\}} (\mathbf{E}\sup_\sigma\|u(\sigma)\|_{\gamma-1+\epsilon}^p)^{\frac{2}{p}}+C(t-s).
\end{align}
The result follows by combing estimates of $H_1$, $H_2$ and $H_3$.
\end{proof}
}

\begin{theorem}\label{prop:full}
  Let
$u(t)$ and $u_N^m$ be  solutions of \eqref{eq:truesol} and \eqref{eq:um} respectively. Then,  under  Assumptions \ref{assump:1}-\ref{assump:4}, there exists a constant $C$ independent of $N$ and $\tau$ such that
\begin{align}
\|u(t_m)-u_N^m\|_{L_2(\Omega; H)}\leq C(N^{-\gamma}+\tau^{\min\{\frac{\gamma}{2}, \frac{1}{2}\}}), \ \ t>0.
\end{align} 
\end{theorem}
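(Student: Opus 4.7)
The strategy is to compare $u(t_m)$ and $u_N^m$ directly via their mild representations. Iterating \eqref{eq:um} yields the Duhamel-type identity
\begin{equation*}
u_N^m = E^m P_N u_0 + \sum_{k=0}^{m-1}E^{m-k}\!\left[\frac{\tau P_N f(u_N^k)}{1+\tau\|f(u_N^k)\|^2}\right] + \sum_{k=0}^{m-1}\int_{t_k}^{t_{k+1}}\!E^{m-k}P_N g(u_N^k)\,dW^Q(s),
\end{equation*}
and subtracting \eqref{eq:truesol} splits $e^m := u(t_m)-u_N^m = J_1+J_2+J_3$ into initial-data, drift, and stochastic-integral pieces. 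The term $J_1 = (E(t_m)-E^m P_N)u_0$ is immediately controlled by the second inequality of \eqref{eq:estimate_En} with $\mu=\gamma$ together with Assumption \ref{assump:4}, giving $\|J_1\|_{L^2(\Omega;H)}\le C(N^{-\gamma}+\tau^{\min\{\gamma/2,1\}})$.

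For the drift term $J_2$, first rewrite the tamed prefactor as
\begin{equation*}
\frac{\tau}{1+\tau\|f(u_N^k)\|^2}=\tau-\frac{\tau^2\|f(u_N^k)\|^2}{1+\tau\|f(u_N^k)\|^2},
\end{equation*}
so that $J_2$ decomposes into a \emph{taming residual} plus the leading drift $\sum_{k=0}^{m-1}\int_{t_k}^{t_{k+1}}\bigl[E(t_m-s)f(u(s))-E^{m-k}P_N f(u_N^k)\bigr]\,ds$. Inside the integral I would use the three-term identity
\begin{equation*}
E(t_m-s)f(u(s))-E^{m-k}P_Nf(u_N^k) = [E(t_m-s)-E^{m-k}P_N]f(u(s))+E^{m-k}P_N[f(u(s))-f(u(t_k))]+E^{m-k}P_N[f(u(t_k))-f(u_N^k)].
\end{equation*}
The first summand is controlled by \eqref{eq:estimate_En} with $\mu=\gamma$ and $\mathbf{E}\sup_t\|f(u(t))\|_\gamma^p<\infty$ from Theorem \ref{prop:H1r}; the second by Lemma \ref{lem:f} combined with the temporal regularity of Lemma \ref{lem:contu}; the third by Lemma \ref{lem:f} and Cauchy--Schwarz in $\Omega$ (absorbing high moments of $\|u(t_k)\|_\gamma$ and $\|u_N^k\|_\gamma$), producing a term of the form $\tau\sum_{k=0}^{m-1}\mathbf{E}\|e^k\|^2$ ready for discrete Gronwall.

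For the stochastic term $J_3$, apply the Burkholder--Davis--Gundy inequality to the analogous three-term decomposition
\begin{equation*}
[E(t_m-s)-E^{m-k}P_N]g(u(s))+E^{m-k}P_N[g(u(s))-g(u(t_k))]+E^{m-k}P_N[g(u(t_k))-g(u_N^k)],
\end{equation*}
where the first piece uses the first inequality in \eqref{eq:estimate_En} together with the $L_{2,\gamma}^0$-bound of Assumption \ref{assump:3} and Theorem \ref{prop:H1r}; the second piece uses the Lipschitz property of $g$ and Lemma \ref{lem:contu}, producing the rate $\tau^{\min\{\gamma/2,1/2\}}$ that dictates the final order; and the third piece contributes $\tau\sum_k\mathbf{E}\|e^k\|^2$. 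Taking expectations, combining all bounds, and applying the discrete Gronwall inequality then closes the estimate.

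The hard part will be bounding the taming residual
\begin{equation*}
\sum_{k=0}^{m-1}E^{m-k}P_N\,\frac{\tau^2\|f(u_N^k)\|^2}{1+\tau\|f(u_N^k)\|^2}f(u_N^k)
\end{equation*}
in $L^2(\Omega;H)$ at order $\tau^{1/2}$. The pointwise bound $\tau\|f(u_N^k)\|^2/(1+\tau\|f(u_N^k)\|^2)\le 1$ reduces the task to controlling sufficiently high moments of $\|f(u_N^k)\|$ uniformly in $k$ and $\tau$; these must be obtained by mimicking the It\^o-type manipulation behind Theorem \ref{thm:uncond} at higher $L^p(\Omega)$ level, that is, a fully discrete analogue of Lemma \ref{lem:uNgam} in $H^\gamma$. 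Once those moment bounds are in place, the H\"older-$1/2$ temporal regularity of the stochastic integral (inherited from Lemma \ref{lem:contu}) is what caps the overall convergence rate at $\tau^{\min\{\gamma/2,1/2\}}$, while the spatial error remains at the sharp order $N^{-\gamma}$ inherited from the semi-discrete analysis of Theorem \ref{prop:semi}.
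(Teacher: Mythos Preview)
Your direct mild-formulation comparison has a genuine gap at the step where you bound
\[
\sum_{k=0}^{m-1}\tau\,\bigl\|E^{m-k}P_N\bigl[f(u(t_k))-f(u_N^k)\bigr]\bigr\|_{L^2(\Omega;H)}.
\]
Because this term appears as a norm and not as an inner product with the error, the one-sided Lipschitz condition \eqref{fcoer} is of no use; you are forced to use Lemma~\ref{lem:f}, which requires control of $\|u_N^k\|_{\gamma}$ in high moments, uniformly in $k$, $N$ and $\tau$. You acknowledge this (``a fully discrete analogue of Lemma~\ref{lem:uNgam} in $H^\gamma$''), but this is precisely the hard result that the paper does \emph{not} prove and that the entire argument is designed to avoid. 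Theorem~\ref{thm:uncond} gives only $L^2$ (and cumulative $H^1$) stability; upgrading to uniform $H^\gamma$ moment bounds for a scheme that treats the polynomial nonlinearity explicitly through a taming factor is a substantial open task, and without it your Gronwall loop cannot close. The same missing ingredient obstructs your treatment of the taming residual, since bounding $\tau\sum_k\mathbf{E}\|f(u_N^k)\|^q$ at order $\tau^{1/2}$ again requires high moments of $u_N^k$ beyond what Theorem~\ref{thm:uncond} provides.

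The paper circumvents this by introducing an auxiliary process $\tilde u_N^n$ solving the scheme with $f$ and $g$ evaluated at the \emph{true} solution $u(t_{n-1})$, and splitting $u(t_n)-u_N^n=(u(t_n)-\tilde u_N^n)+(\tilde u_N^n-u_N^n)$. The first difference is handled by mild comparison exactly as in your $J_1$--$J_3$ decomposition, but now all nonlinear terms carry the regularity of $u$ from Theorem~\ref{prop:H1r}. The second difference $\tilde e^n=\tilde u_N^n-u_N^n$ is estimated \emph{variationally}: testing the difference equation against $\tilde e^n$ produces the inner product $\langle f(\tilde u_N^{n-1})-f(u_N^{n-1}),\tilde e^{n-1}\rangle$, to which the one-sided Lipschitz condition applies directly, yielding $L\|\tilde e^{n-1}\|^2$ with no $H^\gamma$ information on $u_N^k$ needed. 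The remaining cross terms (including the taming residual) come with an extra factor of $\tau$ and are absorbed using only the $H^1$-type stability of Theorem~\ref{thm:uncond}. This auxiliary-process plus energy-argument structure is the key idea your proposal is missing.
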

\begin{proof} 
Following the idea from \cite{QiW20}, we introduce an auxiliary process
\begin{align}
\tilde{u}_N^{n}-\tilde{u}_N^{n-1}=\tau A_N\tilde{u}^{n}+\frac{\tau P_Nf(u(t_{n-1}))}{1+\tau\|f(u(t_{n}))\|^2}+P_Ng(u(t_{n-1}))\Delta W^Q(t_{n}),
\end{align}
which can be rewritten as
\begin{align}\label{eq:tildeu}
\tilde{u}_N^{n}=E^{n}P_Nu_0+\tau\sum\limits_{k=1}^{n} \frac{E^{n-k}P_Nf(u(t_{k-1}))}{1+\tau\|f(u(t_{k-1}))\|^2}+\sum\limits_{k=1}^n \int_{t_{k-1}}^{t_k}E^{n-k}P_Ng(u(t_{k-1}))dW^Q(s).
\end{align}
By the proof of Theorem \ref{prop:H1r}, it is straightforward to infer that $\mathbf{E}\|\tilde{u}_N^n\|_\gamma^p<\infty$. Consequently, $\mathbf{E}\|f(\tilde{u}_N^{n})\|^2<\infty$, for all $1\leq n\leq M$.

Note that \eqref{eq:um} can also be written in closed form
\begin{align}\label{eq:un2}
u_N^n=E^nP_Nu_0+\sum\limits_{k=1}^{n}\int_{t_{k-1}}^{t_k}\frac{E^{n-k}P_Nf(u_N^{k-1})}{1+\tau\|f(u_N^{k-1})\|^2}ds
+\sum\limits_{k=1}^{n}\int_{t_{k-1}}^{t_k}E^{n-k}P_Ng(u_N^{k-1})dW^Q(s).
\end{align}

Next, we split the error $\|u(t_n)-u_N^n\|_{L^2(\Omega; H)}, 1\leq n\leq M$ into two parts, and bound them individually.
\begin{align}
\|u(t_n)-u_N^n\|_{L^2(\Omega; H)}\leq \|u(t_n)-\tilde{u}_N^n\|_{L^2(\Omega; H)}+\|\tilde{u}_N^n-u_N^n\|_{L^2(\Omega; H)}. 
\end{align}
Subtracting \eqref{eq:tildeu} from \eqref{eq:truesol} and taking the associated norm gives
\begin{align}
&\|u(t_n)-\tilde{u}_N^n\|_{L^p(\Omega; H)}\notag\\
&\quad\leq \|(E(t_n)-E^nP_N)u_0\|_{L^p(\Omega;H)}+\bigg\|\int_0^{t_n}E(t_n-s)f(u(s))ds-\tau\sum\limits_{k=1}^{n} \frac{E^{n-k}P_Nf(u(t_{k-1}))}{1+\tau\|f(u(t_{k-1}))\|^2}\bigg\|_{L^p(\Omega;H)}\notag\\
&\qquad+\bigg\|\int_0^{t_n}E(t_n-s)g(u(s))dW^Q(s)-\sum\limits_{k=1}^n \int_{t_{k-1}}^{t_k}E^{n-k}P_Ng(u_N^{k-1})dW^Q(s)\bigg\|_{L^p(\Omega;H)}\notag\\
&\qquad:=I_1+I_2+I_3.
\end{align}
An application of \eqref{eq:estimate_En} gives
\begin{align}
I_1\leq C(N^{-\gamma}+\tau^{\min\{\frac{\gamma}{2},1\}})\|u_0\|_{L^p(\Omega;H^\gamma)}.
\end{align}
$I_2$ can be decomposed in the following way:
\begin{align}
I_2&=\bigg\|\sum\limits_{k=1}^n\int_{t_{k-1}}^{t_k}\bigg[E(t_n-s)f(u(s))-\frac{E^{n-k}P_Nf(u(t_{k-1}))}{1+\tau\|f(u(t_{k-1}))\|^2}\bigg]ds\bigg\|_{L^p(\Omega;H)}\notag\\
&\leq \bigg\|\sum\limits_{k=1}^n\int_{t_{k-1}}^{t_k}E(t_n-s)[f(u(s))-f(u(t_{k-1}))]ds\bigg\|_{L^p(\Omega;H)}\notag\\
&\quad+\bigg\|\sum\limits_{k=1}^n\int_{t_{k-1}}^{t_k}[E(t_n-s)-E^{n-k}]f(u(t_{k-1}))ds\bigg\|_{L^p(\Omega;H)}\notag\\
&\quad+\bigg\|\sum\limits_{k=1}^n\int_{t_{k-1}}^{t_k}\bigg[E^{n-k}f(u(t_{k-1}))-\frac{E^{n-k}P_Nf(u(t_{k-1}))}{1+\tau\|f(u(t_{k-1}))\|^2}\bigg]ds\bigg\|_{L^p(\Omega;H)}\notag\\
&\quad:=I_{21}+I_{22}+I_{23}.
\end{align}

Since $\gamma$ is sufficiently large, we can directly resort to Lemma \ref{lem:f} and Lemma \ref{lem:contu} to bound $I_{21}$. Otherwise, one has to follow  \cite{JentzenKW11, QiW19} to bound this term. 
\begin{align*}
 \|I_{21}\|_{L^p(\Omega:H)}&=\bigg\|\sum\limits_{k=1}^n\int_{t_{k-1}}^{t_k} E(t_n-s)(f(u(s))-f(u(t_{k-1})))ds\bigg\|_{L^p(\Omega;H)}\notag\\
 &\leq C\sum\limits_{k=1}^n \int_{t_{k-1}}^{t_k} \big(1+\|u(s)\|_{L^{2p}(\Omega; H^\gamma)}^{2P-2}+\|u(t_{k-1})\|_{L^{2p}(\Omega; H^\gamma)}^{2P-2}\big) \|u(s)-u(t_{k-1})\|_{L^{2p}(\Omega;H)}\notag\\
&\leq C\sum\limits_{k=1}^n\int_{t_{k-1}}^{t_k} \|u(s)-u(t_{k-1})\|_{L^{2p}(\Omega;H)}ds\notag\\
&\leq C\tau^{\min\{\frac{\gamma}{2},\frac{1}{2}\}}.
\end{align*}
By \eqref{eq:estimate_En}, 
\begin{align}
\|I_{22}\|_{L^p(\Omega;H)}&\leq \bigg\|\sum\limits_{k=1}^n\int_{t_{k-1}}^{t_k}[E(t_n-s)-E^{n-k}P_N]f(u(t_{k-1}))ds\bigg\|_{L^p(\Omega;H)}\notag\\
&\quad+\bigg\|\sum\limits_{k=1}^n\int_{t_{k-1}}^{t_k}[E^{n-k}P_N-E^{n-k}]f(u(t_{k-1}))ds\bigg\|_{L^p(\Omega;H)}\notag\\
&\leq C(N^{-\gamma}+\tau^{\min\{\frac{\gamma}{2},1\}}).
\end{align}
Similarly, using Theorem \ref{prop:H1r} and \eqref{eq:estimate_En} gives
\begin{align}
&\|I_{23}\|_{L^p(\Omega;H)}= \bigg\|\sum\limits_{k=1}^n\int_{t_{k-1}}^{t_k}\bigg[E^{n-k}f(u(t_{k-1}))-\frac{E^{n-k}P_Nf(u(t_{k-1}))}{1+\tau\|f(u(t_{k-1}))\|^2}\bigg]ds\bigg\|_{L^p(\Omega;H)}\notag\\
&\leq \sum\limits_{k=1}^n\int_{t_{k-1}}^{t_k} \bigg\|\frac{(E^{n-k}-E^{n-k}P_N)f(u(t_{k-1}))
+\tau\|f(u(t_{k-1}))\|^2 E^{n-k}f(u(t_{k-1}))}{1+\tau\|f(u(t_{k-1}))\|^2}\bigg\|_{L^p(\Omega;H)}ds\notag\\
&\leq C(N^{-\gamma}+\tau^{\min\{\frac{\gamma}{2},1\}}\|f(u)\|_{L^p(\Omega;H^\gamma)}+C\tau(\|f(u(t_{k-1}))\|_{L^{4p}(\Omega;H)}^4+\|f(u(t_{k-1}))\|^2_{L^{2p}(\Omega;H)})\notag\\
&\leq C(N^{-\gamma}+\tau^{\min\{\frac{\gamma}{2},1\}}).
\end{align}
Hence, 
\begin{align}
\|I_2\|_{L^p(\Omega;H)}\leq C(N^{-\gamma}+\tau^{\min\{\frac{\gamma}{2},\frac{1}{2}\}}).
\end{align}
$I_3$ can be bounded by  using the Burkholder-Davis-Gundy inequality, Assumption \ref{assump:3},  Lemma \ref{lem:contu}, and \eqref{eq:estimate_En}. Note that $\|Q^{\frac{1}{2}}\|_{L_2}<\infty$. 
\begin{align}
\|I_3\|_{L^p(\Omega;H)}^2&=\bigg\|\sum\limits_{k=1}^n \int_{t_{k-1}}^{t_k}[E(t_n-s)g(u(s))-E^{n-k}P_Ng(u(t_{k-1})]dW^Q(s) \bigg\|^2_{L^p(\Omega;H)}\notag\\
&\leq C\sum\limits_{k=1}^n\int_{t_{k-1}}^{t_k}\|E(t_n-s)g(u(s))-E^{n-k}P_Ng(u(t_{k-1})\|^{2}_{L^p(\Omega;L_2^0)}ds\notag\\
&\leq C\sum\limits_{k=1}^n\int_{t_{k-1}}^{t_k}\|E(t_n-s)(g(u(s))-g(u(t_{k-1})))\|^{2}_{L^p(\Omega;L_2)}ds\notag\\
&\quad+C\sum\limits_{k=1}^n\int_{t_{k-1}}^{t_k}\|(E(t_n-s)-E^{n-k}P_N)g(u(t_{k-1}))\|^{2}_{L^p(\Omega;L_2)}ds\notag\\
&\leq C\sum\limits_{k=1}^n\int_{t_{k-1}}^{t_k}\|u(s)-u(t_{k-1})\|^{2}_{L^p(\Omega;L_2)}ds
+C(N^{-2\gamma}+\tau^{\min\{\gamma,2\}})\notag\\
&\leq C(N^{-2\gamma}+\tau^{\min\{\gamma,1\}}).
\end{align}
Thus, we can obtain
\begin{align}\label{eq:tildee}
&\|u(t_n)-\tilde{u}_N^n\|_{L^p(\Omega; H)}\leq C(N^{-\gamma}+\tau^{\min\{\frac{\gamma}{2},\frac{1}{2}\}}). 
\end{align}

Next, we estimate $\|\tilde{u}_N^n-u_N^n\|_{L^p(\Omega;H)}$. Denote $\tilde{e}_n:=\tilde{u}_N^n-u_N^n$. It is clear that $\tilde{e}_n$ satisfies the equation
\begin{align}
&\tilde{e}^n-\tilde{e}^{n-1}=\tau A_N\tilde{e}^n+\frac{\tau P_Nf(u(t_{n-1})}{1+\tau\|f(u(t_{n-1}))\|^2}-\frac{\tau f(u_N^{n-1})}{1+\tau\|f(u_N^{n-1})\|^2}\quad\\
&\qquad\qquad\qquad+P_N(g(u(t_{n-1}))-g(u_N^{n-1}))\Delta W^Q(t_n), \notag\\
&\tilde{e}_0=0. 
\end{align}
Multiplying both sides by $\tilde{e}^n$ gives
\begin{align}\label{eq:main-e}
\frac{1}{2}\|\tilde{e}^n\|^2&-\frac{1}{2}\|\tilde{e}^{n-1}\|^2+\frac{1}{2}\|\tilde{e}^n-\tilde{e}^{n-1}\|^2+\tau \|\nabla \tilde{e}^n\|^2\notag\\
&=\bigg\langle \frac{\tau f(u(t_{n-1})}{1+\tau\|f(u(t_{n-1}))\|^2}-\frac{\tau f(u_N^{n-1})}{1+\tau\|f(u_N^{n-1})\|^2}, \tilde{e}^n \bigg\rangle\notag\\
& \quad+ \langle (g(u(t_{n-1}))-g(u_N^{n-1}))\Delta W^Q(t_n), \tilde{e}^n \rangle\notag\\
&:=J+K.
\end{align}
A careful computation implies
\begin{align}
J&=\bigg|\bigg\langle \frac{\tau (f(u(t_{n-1})-f(u_N^{n-1}))+\tau^2(\|f(u_N^{n-1})\|^2f(u(t_{n-1})-\|f(u(t_{n-1}))\|^2 f(u_N^{n-1}))}{(1+\tau\|f(u(t_{n-1}))\|^2)(1+\tau\|f(u_N^{n-1})\|^2)}, \tilde{e}^n \bigg\rangle\bigg|\notag\\
&< \tau |\langle f(u(t_{n-1})-f(u_N^{n-1}),  \tilde{e}^n\rangle |+\tau^2\langle  \|f(u_N^{n-1})\|^2f(u(t_{n-1}))-\|f(u(t_{n-1}))\|^2f(u_N^{n-1}),\tilde{e}^n\rangle\notag\\
&:=J_1+J_2.
\end{align}
The estimate of $J_2$ is simple and we first bound it.
\begin{align}
J_2&\leq \tau^2\bigg(\frac{\|\tilde{e}^n\|^2}{2}+\frac{1}{2}\bigg\|\|f(u_N^{n-1})\|^2f(u(t_{n-1}))-\|f(u(t_{n-1}))\|^2f(u_N^{n-1})  \bigg\|^2\bigg)\notag\\
 &\leq \tau^2\bigg(\frac{\|\tilde{e}^n\|^2}{2}+\|f(u_N^{n-1})\|^4 \|f(u(t_{n-1})\|^2+\|f(u(t_{n-1}))\|^4\|f(u_N^{n-1})\|^2\bigg).
\end{align}
Then, we derive
\begin{align}
\mathbf{E}J_2&\leq \frac{\tau^2}{2}\mathbf{E}\|\tilde{e}^n\|^2+\frac{\tau^2}{2}(\mathbf{E}\|f(u_N^{n-1})\|^8+\mathbf{E}\|f(u(t_{n-1})\|^4+\mathbf{E}\|f(u(t_{n-1})\|^8+\mathbf{E}\|f(u_N^{n-1})\|^4).
\end{align}
Next, let us turn to $J_1$. We apply the one-sided Lipschitz condition for $f$, 
\begin{align}
J_1&\leq \tau|\langle f(u(t_{n-1})-f(\tilde{u}^{n-1}),  \tilde{e}^n\rangle |+\tau |\langle f(\tilde{u}^{n-1})-f(u_N^{n-1}),  \tilde{e}^{n}\rangle | \notag\\
&\leq \tau|\langle f(u(t_{n-1})-f(\tilde{u}^{n-1}),  \tilde{e}^n\rangle |+\tau |\langle f(\tilde{u}^{n-1})-f(u_N^{n-1}),  \tilde{e}^{n-1}\rangle |\notag\\
&\quad+\tau |\langle f(\tilde{u}^{n-1})-f(u_N^{n-1}),  \tilde{e}^{n}- \tilde{e}^{n-1}\rangle |\notag\\
&\leq \tau\bigg[\|f(u(t_{n-1})-f(\tilde{u}^{n-1})\|^2+\frac{1}{4}\|\tilde{e}^n\|^2\bigg]+L\tau\|\tilde{e}^{n-1}\|^2\notag\\
&\quad+\tau^2\|f(\tilde{u}^{n-1})-f(u_N^{n-1}) \|^2+\frac{1}{4}\|\tilde{e}^{n}- \tilde{e}^{n-1} \|^2.
\end{align}
Since  $\mathbf{E}\|\tilde{u}^{n-1})\|_\gamma^2<\infty$ and $\mathbf{E}\|u(t_n)\|_\gamma^2<\infty$, an application of Lemma \ref{lem:f} gives
\begin{align}
\mathbf{E}J_1&\leq C\tau \mathbf{E}\|\tilde{u}^{n-1}-u(t_{n-1})\|^2+\frac{\tau}{4}\mathbf{E}\|\tilde{e}^n\|^2+L\tau\mathbf{E}\|\tilde{e}^{n-1}\|^2\notag\\
&\quad +C\tau^2\mathbf{E}(\|f(\tilde{u}^{n-1})\|^2+\|f(u_N^{n-1}) \|^2)+
\frac{1}{4}\mathbf{E}\|\tilde{e}^{n}- \tilde{e}^{n-1} \|^2.
\end{align}
Therefore, 
\begin{align}\label{eq:J}
\mathbf{E}J&\leq C\tau (N^{-2\gamma}+\tau^{\min\{\gamma, 1\}})+\bigg(\frac{\tau}{4}+\frac{\tau^2}{2}\bigg)\mathbf{E}\|\tilde{e}^n\|^2+L\tau\mathbf{E}\|\tilde{e}^{n-1}\|^2+
\frac{1}{4}\mathbf{E}\|\tilde{e}^{n}- \tilde{e}^{n-1} \|^2\notag\\
&\quad+C\tau^2[\mathbf{E}\|f(u_N^{n-1})\|^8
+\mathbf{E}\|f(u(t_{n-1})\|^4+\mathbf{E}\|f(u(t_{n-1})\|^8+\mathbf{E}\|f(u_N^{n-1})\|^4\notag\\
&\quad+\mathbf{E}\|f(\tilde{u}^{n-1})\|^2+\mathbf{E}\|f(u_N^{n-1}) \|^2]
\end{align}
Now it remains to bound $K$. By Assumption \ref{assump:3} and \eqref{eq:tildee},
\begin{align}\label{eq:K}
\mathbf{E}K&= \mathbf{E}\langle (g(u(t_{n-1}))-g(u_N^{n-1}))\Delta W^Q(t_n), \tilde{e}^n-\tilde{e}^{n-1} \rangle\notag\\
&\leq \mathbf{E} \|g(u(t_{n-1}))-g(u_N^{n-1}))\Delta W^Q(t_n)\|^2+\frac{1}{4}\mathbf{E} \|\tilde{e}^n-\tilde{e}^{n-1}\|^2\notag\\
&\leq C\tau Tr(Q) \mathbf{E}\|g(u(t_{n-1}))-g(u_N^{n-1}))\|^2+\frac{1}{4}\mathbf{E} \|\tilde{e}^n-\tilde{e}^{n-1}\|^2\notag\\
&\leq C\tau \mathbf{E}\|u(t_{n-1})-u_N^{n-1}\|^2+\frac{1}{4}\mathbf{E} \|\tilde{e}^n-\tilde{e}^{n-1}\|^2\notag\\
&\leq C\tau \mathbf{E}\|u(t_{n-1})-\tilde{u}_N^{n-1}\|^2+C\tau\mathbf{E}\|\tilde{e}^{n-1}\|^2 +\frac{1}{4}\mathbf{E} \|\tilde{e}^n-\tilde{e}^{n-1}\|^2
\notag\\
&\leq C\tau (N^{-2\gamma}+\tau^{\min\{{\gamma},1\}})+C\tau\mathbf{E}\|\tilde{e}^{n-1}\|^2 +\frac{1}{4}\mathbf{E} \|\tilde{e}^n-\tilde{e}^{n-1}\|^2.
\end{align}
Hence, substituting \eqref{eq:J}  and \eqref{eq:K} into \eqref{eq:main-e} and taking expectation,  we have for $\tau$ sufficiently small
\begin{align}
\mathbf{E}\|\tilde{e}^n\|^2\leq A(\tau)\mathbf{E}\|\tilde{e}^{n-1}\|^2+C\tau(N^{-2\gamma}+\tau^{\min\{{\gamma},1\}})+C\tau^2 B_{n-1}, 
\end{align}
where 
\begin{align*}
&A(\tau)=\frac{1+L\tau}{1-\frac{\tau}{4}-C\tau^2}, \quad
B_{n-1}:=\mathbf{E}\|f(u_N^{n-1})\|^8
+\mathbf{E}\|f(u(t_{n-1})\|^4+\mathbf{E}\|f(u(t_{n-1})\|^8\notag\\
&\qquad\qquad\qquad\qquad\qquad\qquad\qquad+\mathbf{E}\|f(u_N^{n-1})\|^4+\mathbf{E}\|f(\tilde{u}^{n-1})\|^2+\mathbf{E}\|f(u_N^{n-1}) \|^2.
\end{align*}
By a simple calculation,
\begin{align*}
\lim\limits_{n\to\infty} A(\tau)^n=e^{(L+\frac{1}{4})T},
\end{align*}
and by Theorem \ref{prop:H1r} and Theorem \ref{thm:uncond} and the embedding $H^1\hookrightarrow L^p, p\geq 2$
\begin{align*}
C\tau^2\sum\limits_{k=1}^{n-1} B_{k-1}\leq C\tau^2\sum\limits_{k=1}^{n-1} (\|\nabla u(t_k)\|^2+\|\nabla u_N^k\|^2)\leq C\tau.
\end{align*}
Therefore, 
\begin{align}\label{eq:etild1}
\mathbf{E}\|\tilde{e}^n\|^2&\leq C\tau^2\sum\limits_{k=1}^{n-1} A(\tau)^{n-k}B_{k-1}+ C\tau (N^{-2\gamma}+\tau^{\min\{{\gamma},1\}})\sum\limits_{k=1}^{n-1} A(\tau)^k\notag\\
&\leq C(N^{-2\gamma}+\tau^{\min\{{\gamma},1\}})+C\tau.
\end{align}
The result follows by a combination of \eqref{eq:tildee} and \eqref{eq:etild1}. 
\end{proof}

\subsection{Efficient implementation with spectral-Galerkin method}
We now describe our algorithm for implementing \eqref{eq:um} using the spectral-Galerkin described in the last section. To fix the idea, we consider $d=2$ and $A=\Delta$. In this case, we have $\mathbf{A}=I$ and  $\mathbf{B}$ is given by \eqref{matrixB}.
Writing
$
u_N^{k}=\sum\limits_{m,n=0}^{N-2} c_{mn}^{k}\phi_{m}(x)\phi_n(y)$ in \eqref{eq:um},  setting $\mathbf{C}^{k}=(c_{mn}^{k})=\mathbf{HV}^{k}\mathbf{H}^T$ with 
 $\mathbf{V}^k=(V_{mn}^k)_{m,n=0,1,\cdots, N-2}$ and $\mathbf{H}$ is the eigenmatrix of $\mathbf{B}$ defined in \eqref{eigendecomp},
 and recalling \eqref{decoup}, we derive
\begin{align}\label{eq:V}
{\lambda_m\lambda_n}\frac{{V}_{mn}^{k}-{V}_{mn}^{k-1} }{\tau}&+ {(\lambda_m+\lambda_n)}{V}_{mn}^{k}= \bigg(\frac{1}{1+\tau\|f(u_N^{k-1})\|^2}\bigg)(\mathbf{H}^T\mathbf{F}^{k-1}\mathbf{H})_{mn}\notag\\
&\quad+\sum\limits_{j_1,j_2=1}^J 
{\sqrt{q_{j_1j_2}}}(\mathbf{H}^T\mathbf{G}_{j_1j_2}^{k-1} \mathbf{H})_{mn}  \Delta \beta_{j_1j_2}(t_{k-1}).
\end{align}
Here $\Delta\beta_{j_1j_2}(t_{k-1})$ are i.i.d random variables following $N(0,\tau)$-distribution and 
\begin{align}
&\mathbf{F}^{k-1}=(f_{mn}^{k-1}), \;\;f_{mn}^{k-1}=\int_{\mathcal{O}}f(u_N^{k-1})\phi_m(x)\phi_n(y)dxdy;\notag\\
&\mathbf{G}_{j_1j_2}^{k-1}=(g^{j_1j_2,k-1}_{mn}),\;\; g^{j_1j_2,k-1}_{mn}=\int_{\mathcal{O}}g(u_N^{k-1})e_{j_1j_2}(x,y)\phi_m(x)\phi_n(y)dxdy.
\end{align}
Hence, we can determine ${V}_{mn}^{k}$ explicitly from \eqref{eq:V}.

Note that in general $\mathbf{F}^{k-1}$ and  $\mathbf{G}_{j_1j_2}^{k-1}$ can not be computed exactly. In practice, the following pseudo-spectral approach is used to approximately compute  $\mathbf{F}^{k-1}$ and  $\mathbf{G}_{j_1j_2}^{k-1}$.
Let $\{x_i,y_i\}_{i=0,N}$ be the Legendre-Gauss lobatto points, and $\mathbf{P}_N$ be the set of polynomials with degree less or equal than $N$ in each direction. We define an interpolation operator $I_N: C(\bar{\mathcal{O}})\rightarrow P_N$ such that $I_Nu(x_i,y_j)=u(x_i,y_j),\;i,j=01,1,\cdots,N$. Then, we approximate  $\mathbf{F}^{k-1}$ and  $\mathbf{G}_{j_1j_2}^{k-1}$ as follows:
\begin{align}
&f_{mn}^{k-1}\approx\int_{\mathcal{O}}I_N(f(u_N^{k-1}))\phi_m(x)\phi_n(y)dxdy;\notag\\
& g^{j_1j_2,k-1}_{mn}\approx\int_{\mathcal{O}} I_N(g(u_N^{k-1})e_{j_1j_2}(x,y))\phi_m(x)\phi_n(y)dxdy.
\end{align}
Since  $I_N(f(u_N^{k-1}))\in \mathbf{P}_N$, we can determine $h^{k-1}_{mn}$ such that
$I_N(f(u_N^{k-1}))=\sum_{m,n=0}^N h^{k-1}_{mn} L_n(x)L_m(y)$ where $\{L_j(\cdot)\}$ are the shifted Legendre polynomials. Hence, $f_{mn}^{k-1}$ can be easily obtained using the orthogonality of Legendre polynomials. The total cost of computing $\mathbf{H}^T\mathbf{F}^{k-1}\mathbf{H}$ is $O(N^{d+1})$ for the $d$-dimensional problem. 
One can compute 
$g^{j_1j_2,k-1}_{mn}$ in a similar way with the total cost of computing $\mathbf{H}^T\mathbf{G}_{j_1j_2}^{k-1} \mathbf{H}$ is $J^dN^{d+1}$ for the $d$-dimensional problem.

In summary, our algorithm can be described as follows:
\begin{enumerate}
\item Compute the eigenvalues and eigenvectors of the generalized eigenvalue problem $\mathbf{BH}=\mathbf{H\Lambda}$;
\item Find $\mathbf{C}^0$  by projecting $u_0$ onto $\mathcal{P}_N\otimes \mathcal{P}_N$;
\item At time step $t_{k-1}$, compute $\mathbf{F}^{k-1}$, $\mathbf{G}_{j_1j_2}^{k-1}$ and generate a random matrix $\Delta\beta_{j_1j_2}(t_{k})$;
\item Use \eqref{eq:V} to obatin $\mathbf{V}^{k}$,  set $\mathbf{C}^{k}=\mathbf{HV}^{k}\mathbf{H}^T$ and $u_N^{k}=\sum\limits_{m,n=0}^{N-2} c_{mn}^{k}\phi_{m}(x)\phi_n(y)$;
\item Go to the next step.
\end{enumerate}


\section{Numerical Experiments}
In this section,  two numerical experiments are provided to illustrate the theoretical results claimed in the previous sections.  


\begin{example}
Consider the following 1-d stochastic Allen-Cahn equation on the time domain $0\leq t\leq 1$:
 \begin{equation}
\begin{cases}
\displaystyle{du=\frac{1}{\pi^2}\frac{\partial^2u}{\partial x^2}dt+(u-u^3)dt+g(u)dW^Q(t), \;\; x\in I=(0,1),}\\
u(t,0)=u(t,1)=0,\\
u(0,\cdot)=\sin\pi x
\end{cases}
\end{equation}
and we take 
\begin{align*}
W^Q(t)=\sum\limits_{j=1}^\infty \sqrt{q_j} \sin(j\pi x) \beta_j(t),
\end{align*}
Here, $L_j(x)$ is the shifted Legendre polynomials on $[0,1]$ with $q_j$ to be specified below. 
\end{example}

Obviously, eigenfunctions of $A=\frac{\partial^2}{\partial x^2}$ with homogeneous Dirichlet boundary condition on $I$ are $\{\sin j\pi x\}_{j=1}^\infty$,  
and $A$ and $Q$ commute for this case. 
To measure the spatial  error, we run $K = 200$ independent realizations for each spatial expansion terms with $N= 12,14,16,18, 20$ and temporal steps $\tau=1E-5$ and truncate the first $100$ terms in $W^Q(t)$. Since the true solution is unknown, we 
take the numerical solution with $\tau=1E-5$ and $N=100$ as a surrogate. The error $E\|U_N^k-u(t_k,\cdot)\|$ is approximated by
\begin{align}\label{numer:err}
E\|u(t_k,\cdot)-u_N^k\|\approx \sqrt{\frac{1}{K}\sum\limits_{i=1}^K \|u_N^k(\omega_i)-u(t_k,\omega_i)\|^2}.
\end{align}

 First, we consider additive noise and take $g(u)=I$. Hence, we examine the condition $\|A^{\frac{\gamma-1}{2}}Q^{\frac{1}{2}}\|_{L^2}<\infty$ associated with $q_j$ and $\gamma$, and consider the following two cases:

1) $q_j=j^{-1.001}$,  associated with $\gamma=1$;   

2) $q_j=j^{-5.001}$,  associated with $\gamma=3$.

\noindent{One  observes from  Fig \ref{fig1}  that the spatial error decays at a rate of $\mathcal{O}(N^{-\gamma})$  for both cases as Theorem \ref{prop:full} predicts, and the restriction $\gamma<2$ is lifted in contrast to \cite{CuiH19, Wang17, QiW19, KovacsLL15, Kruse14}.  }

Similarly, In order to find the temporal error convergence rate, we freeze $N=100$ and split the time interval $[0,1]$ into $96$, $144$, $192$, $256$, $384$ subintervals for 1) and $256$, $384$, $768$, $1152$, $1536$ for 2), and truncate the first $100$ terms in $W^Q(t)$.   A surrogate of true solution is obtained using $N=100$ and $M=9216$. Fig \ref{fig2} demonstrates that the temporal error decays at a rate of $\mathcal{O}(\tau^{\min\{\frac{\gamma}{2},1\}})$. 

Secondly, in order to demonstrate the prediction in Theorem \ref{prop:full}, we also choose $g(u)=\frac{1-u^2}{1+u^2}$ and $q_j=j^{-5.001}$ in $W^Q(t)$ and repeat the process above. From Fig \ref{fig3}, It is evident that the convergence rate is $\mathcal{O}(N^{-3}+\tau^{1/2})$, which is consistent with Theorem \ref{prop:full}. 

\begin{figure}[ht]
{}\vskip -1.5in
\centering\resizebox{140mm}{150mm}
{\includegraphics{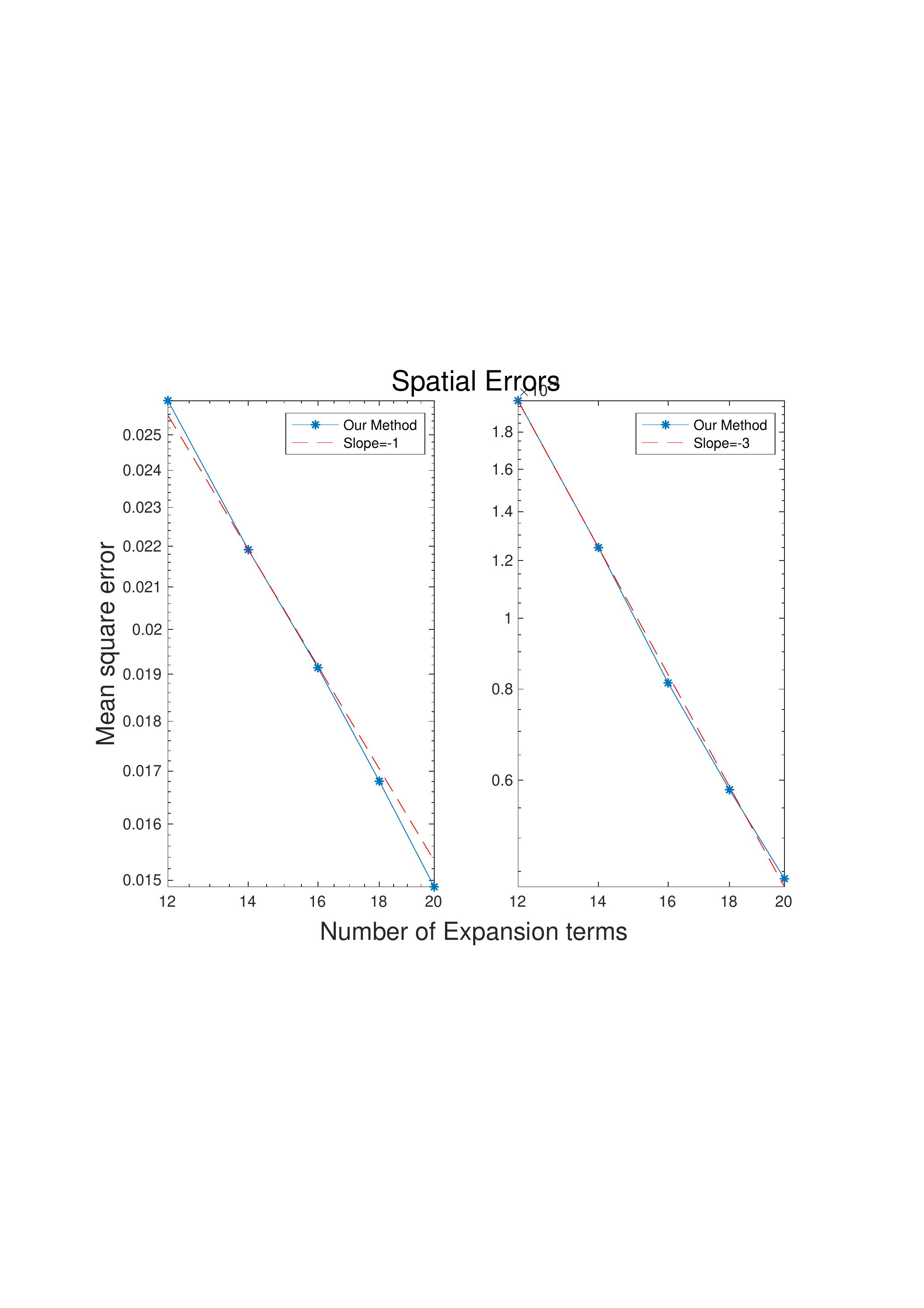}}
\vskip -1.5in
\caption{Spatial errors of 1-d stochastic Allen-Cahn equation with $g(u)=I$: (left) $q_j=j^{-1.001}$ and (right) $q_j=j^{-5.001}$. } \label{fig1}
\end{figure}

\begin{figure}[ht]
{}\vskip -1.5in
\centering
\resizebox{140mm}{150mm}{\includegraphics{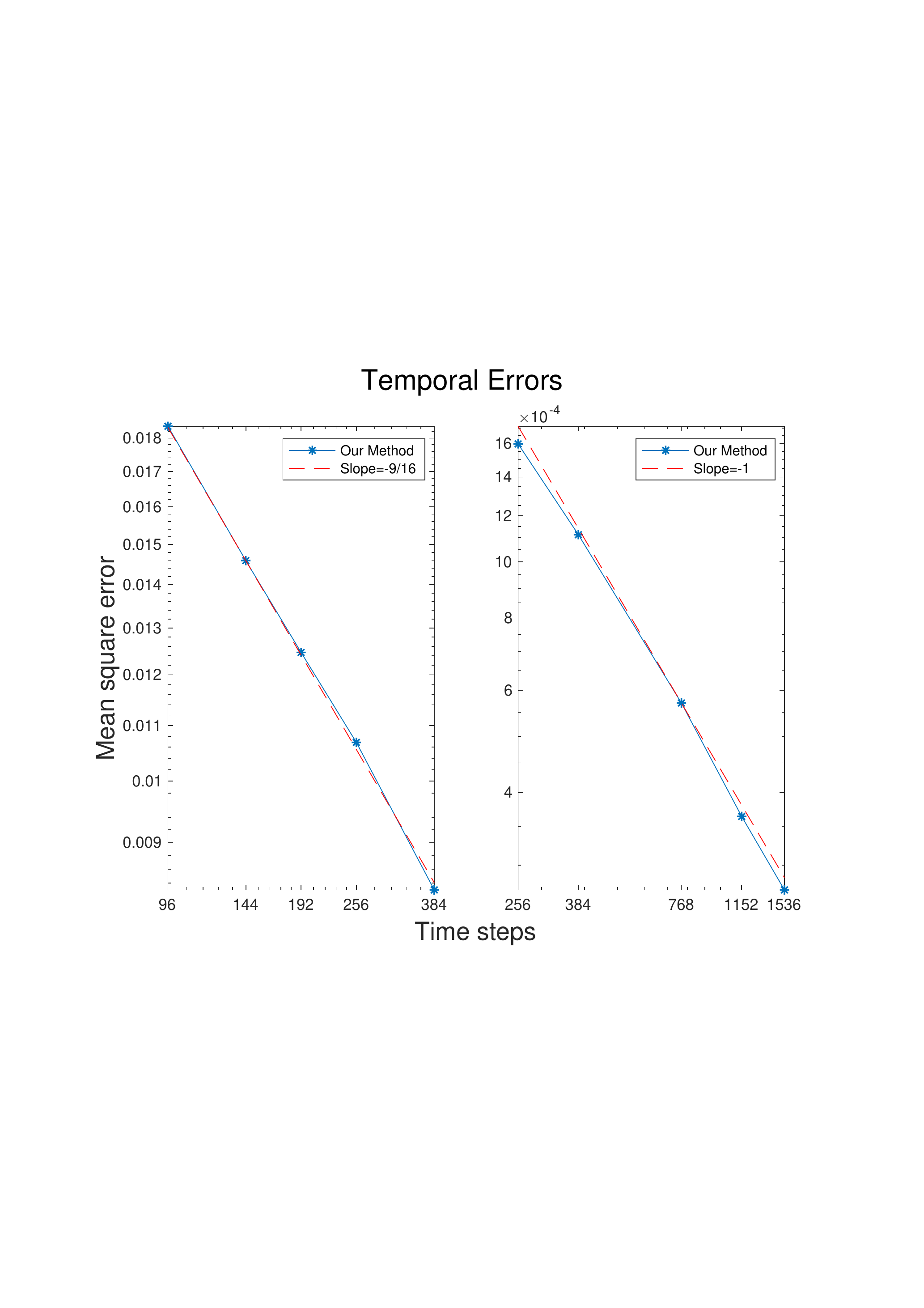}}
\vskip -1.5in
\caption{Temporal errors of 1-d stochastic Allen-Cahn equation with $g(u)=I$: (left) $q_j=j^{-1.001}$ and (right): $q_j=j^{-5.001}$.} \label{fig2}
\end{figure}

\begin{figure}[ht]
{}\vskip -1.5in
\centering
\resizebox{140mm}{150mm}{\includegraphics{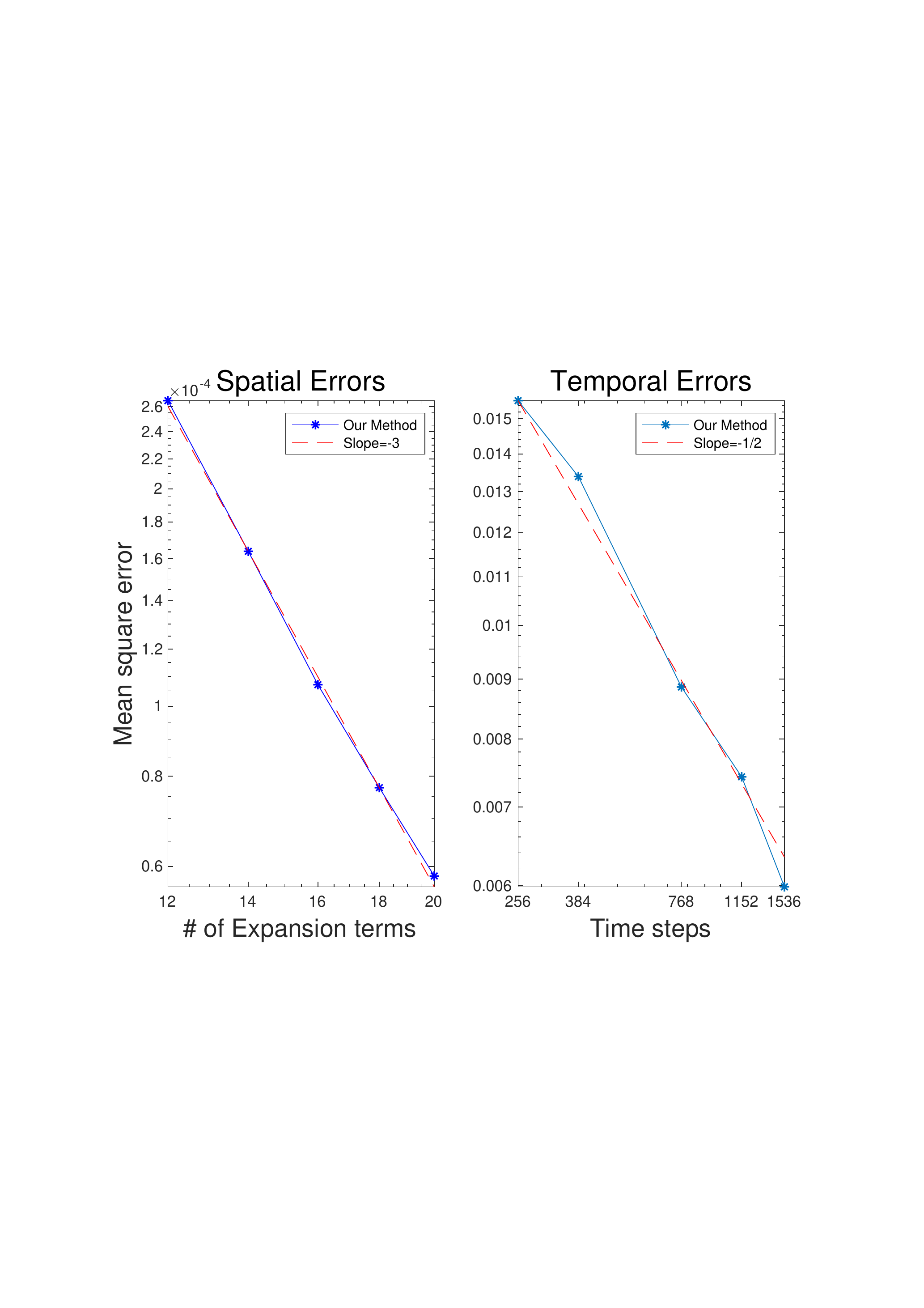}}
\vskip -1.5in
\caption{Numerical errors of 1-d Allen-Cahn equation with $g(u)=(1-u^2)/(1+u^2)$.} \label{fig3}
\end{figure}

\begin{example}
Consider the following 2-d stochastic Allen-Cahn equation:
\begin{equation}
\begin{cases}
du=\frac{1}{2}\Delta udt+(u-u^3)dt+g(u)dW^Q, (x,y)\in (0,1)^2,\notag\\
u_0(x,y,0)=\sin(\pi x)\sin(\pi y),
\end{cases}
\end{equation}
where $g(u)=\sin(u)$   and 
$$
W^Q(t)=\sum\limits_{j_1,j_2=1}^\infty 1/\sqrt{(j_1^2+j_2^2)^3}(\sin (j_1\pi x+\phi_{j_1}(x)))(\sin(j_2\pi y)+\phi_{j_2}(y))\beta_{j_1,j_2}(t).
$$
Here, $\phi(x)$ is defined in \eqref{eq:phim},
\end{example}

In the experiment,  we choose $K=200$ in \eqref{numer:err} to measure the error again.  To balance the CPU runtime and  accuracy, we truncate the first $10$ terms in each direction of $W^Q(t)$. In order to find the spatial convergence rate, we use the numerical solution with $N=100$ and $M=1000$ for $T=0.1$ as a surrogate of true solution. 
From Figure \ref{fig4}, we can clearly observe a spatial convergence rate of approximately $\mathcal{O}(N^{-3/2})$ for $N=16,17,19, 21$ and $22$. 

Similarly, in order to find temporal convergence rate, we use the numerical solution with $N=60$ and $M=2304$ for $T=0.5$ as a surrogate of true solution. It is clear that temporal convergence rate
$\mathcal{O}(\tau^{1/2})$ for $\tau=1/64, 1/72, 1/96, 1/128, 1/144$ can be observed from Figure \ref{fig4}.  
\begin{figure}[ht]
{}\vskip -1.5in
\centering
\resizebox{140mm}{150mm}{\includegraphics{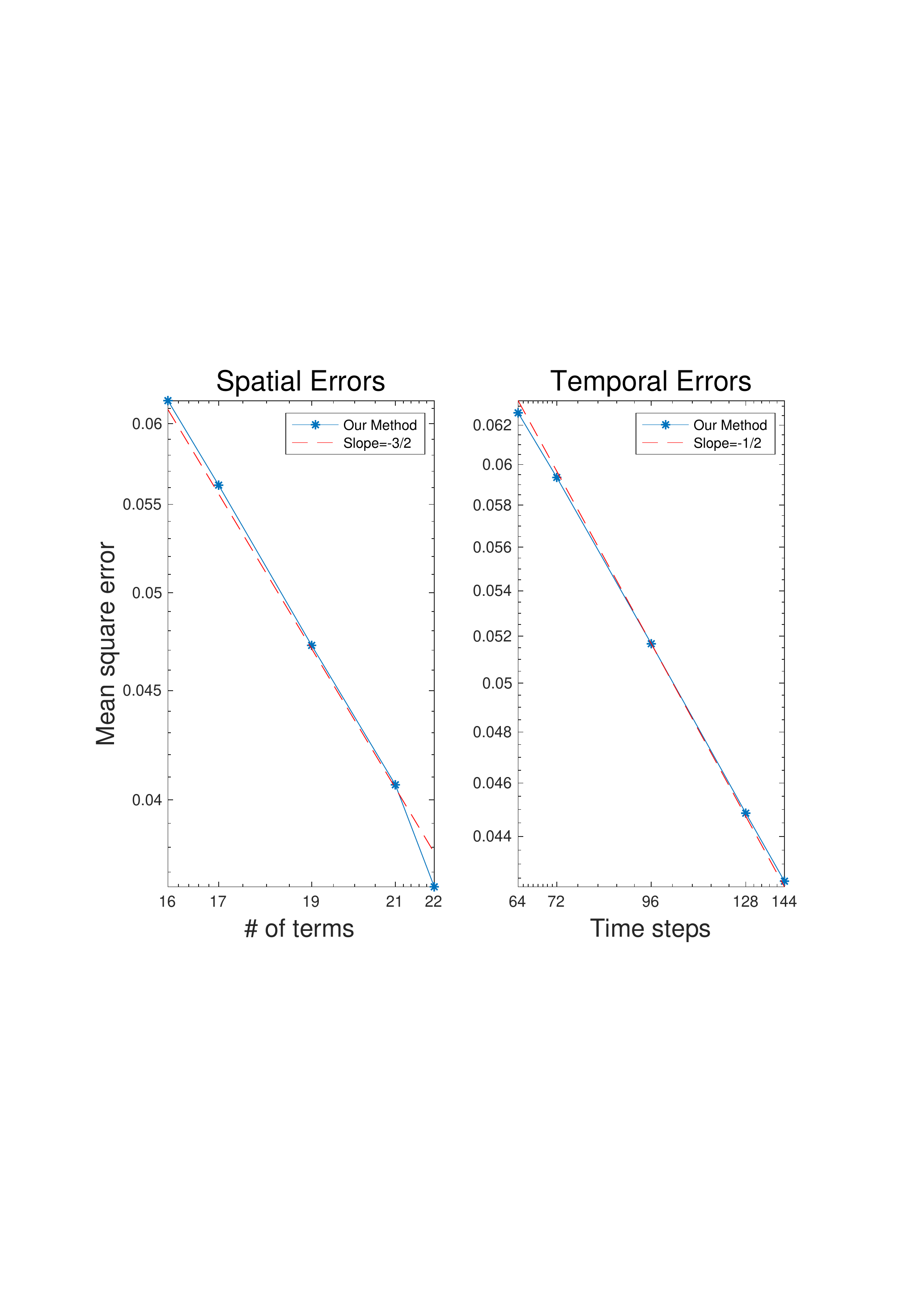}}
\vskip -1.5in
\caption{Numerical errors of 2d stochastic A-C equation.} \label{fig4}
\end{figure}

\section{Concluding remarks}
We developed  a fully discrete scheme   for nonlinear stochastic partial differential equations with non-globally Lipschitz coefficients driven by multiplicative noise in a multi-dimensional setting. The space discretization is a Legendre spectral method, so it  does not require the elliptic operator $A$ and the covariance operator $Q$ of noise in the equation commute, 
while can still be efficiently implemented as with a Fourier method.
 The time discretization is a tamed semi-implicit scheme which treats the nonlinear term explicitly while being unconditionally stable, and it avoids solving nonlinear systems at each time step.  
Under reasonable regularity assumptions, we established optimal strong convergence rates in both space and time for our fully discrete scheme.  We also presented several numerical experiments to validate   our theoretical results.  

We note that the   fully discrete scheme constructed in this paper can also be used for the three-dimensional case, but our error analysis is only valid for   one- and two-dimensional cases.

\end{document}